\DeclareMathAlphabet{\mathscrbf}{OMS}{mdugm}{b}{n}
\definecolor{bckg}{RGB}{20.8, 20.8, 20.8}
\definecolor{oneblue}{rgb}{0.0, 0.0, 0.85}
\definecolor{Lightblue}{RGB}{214, 214, 214}
\definecolor{bluepigment}{rgb}{0.2, 0.2, 0.6}
\definecolor{charcoal}{rgb}{0.21, 0.27, 0.31}
\definecolor{denimblue}{rgb}{0.08, 0.38, 0.74}
\definecolor{Lightgray}{rgb}{0.89, 0.89, 0.89}
\definecolor{darkgrey}{rgb}{0.273, 0.281, 0.30}
\definecolor{darkelectricblue}{rgb}{0.33, 0.41, 0.47}
\titleformat{\section}[block]
  {\color{NavyBlue}\Large\sffamily\bfseries}
  {}
  {0.0em}
  {\colorbox{bckg!5}{\strut\parbox{\dimexpr\linewidth-2\fboxsep\relax}{\thesection. #1}}}
  [\vspace*{0.33em}]
\titleformat{\paragraph}[runin]
  {\color{bluepigment}\sffamily\small\bfseries}
  {}
  {0em}
  {#1}
\titlespacing{\section}{0.0em}{1.5em plus 2pt minus 2pt}%
{1.0em plus 2pt minus 2pt}[0em]
\titlespacing{\subsection}{0.5em}{1.5em plus 2pt minus 2pt}%
{1.0em}[0em]
\titlespacing{\subsubsection}{0.5em}{1.5em plus 2pt minus 2pt}%
{1.0em plus 2pt minus 2pt}[0em]
\newlength{\tocsep} 
\def\@setauthors{%
  \begingroup
  \def\thanks{\protect\thanks@warning}%
  \trivlist
  \centering\footnotesize \@topsep30\p@\relax
  \advance\@topsep by -\baselineskip
  \item\relax
  \author@andify\authors
  \def\\{\protect\linebreak}%
  \textsc{\normalsize\textcolor{charcoal}{\authors}}%
  \ifx\@empty\contribs
  \else
    ,\penalty-3 \space \@setcontribs
    \@closetoccontribs
  \fi
  \endtrivlist
  \endgroup
}
\def\@settitle{\begin{center}%
  \baselineskip14\p@\relax
    \bfseries
    \textsc{\Large\textcolor{charcoal}{\@title}}
  \end{center}%
}
\setlist[description]{%
  topsep=30pt,               
  itemsep=5pt,               
  font={\bfseries\sffamily\color{NavyBlue}}, 
}
\newcommand*\Title{\textcolor{bluepigment}{On the multi-symplectic structure of Boussinesq-type equations. II}}
\newcommand*\Authors{\textcolor{bluepigment}{A.~Dur\'an, D.~Dutykh \& D.~Mitsotakis}}
\newcommand*{\plogo}{\textcolor{gray}{{\texttt{arXiv.org} / \textsc{hal}}}} 
\numberwithin{equation}{section}
\newtheorem{remark}{Remark}
\newtheorem{theorem}{Theorem}
\newcommand{\up}[1]{$^{\mathrm{\small\textsf{#1}}}$} 
\newcommand{\N}{\mathds{N}}
\newcommand{\R}{\mathds{R}}
\newcommand{\Z}{\mathds{Z}}
\newcommand{\ui}{\mathrm{i}}
\newcommand{\ue}{\mathrm{e}}
\renewcommand{\O}{\mathcal{O}}
\renewcommand{\nu}{\text{\textnu}}
\renewcommand{\eta}{\text{\texteta}}
\renewcommand{\beta}{\text{\textbeta}}
\renewcommand{\mu}{\text{\textmugreek}}
\renewcommand{\alpha}{\text{\textalpha}}
\renewcommand{\kappa}{\text{\textkappa}}
\renewcommand{\omega}{\text{\textomega}}
\renewcommand{\theta}{\text{\texttheta}}
\newcommand{\ud}{\mathrm{d}\hspace{0.08em}}
\newcommand{\ep}{\mathfrak{e}}
\newcommand{\J}{\mathds{J}}
\newcommand{\K}{\mathds{K}}
\newcommand{\M}{\mathds{M}}
\newcommand{\Zz}{\mathbf{Z}}
\newcommand{\Id}{\mathds{I}}
\newcommand{\A}{\mathfrak{A}}
\newcommand{\B}{\mathfrak{B}}
\newcommand{\E}{\mathfrak{E}}
\newcommand{\F}{\mathfrak{F}}
\newcommand{\I}{\mathfrak{I}}
\newcommand{\Ec}{\mathcal{E}}
\newcommand{\Ic}{\mathcal{I}}
\newcommand{\Gg}{\mathfrak{G}}
\newcommand{\Gm}{\mathfrak{M}}
\renewcommand{\H}{\mathscr{H}}
\renewcommand{\geq}{\geqslant}
\renewcommand{\leq}{\leqslant}
\newcommand{\z}{\boldsymbol{z}}
\renewcommand{\S}{\mathfrak{S}}
\newcommand{\Mm}{\mathcal{M}}
\newcommand{\Dd}{\mathcal{D}}
\newcommand{\Cc}{\mathcal{C}}
\newcommand{\Nn}{\mathcal{N}}
\newcommand{\D}{\mathfrak{D}}
\renewcommand{\L}{\mathds{L}}
\renewcommand{\mapsto}{\longmapsto}
\newcommand{\cf}{\emph{cf.}\xspace}
\newcommand{\ie}{\emph{i.e.}\xspace}
\newcommand{\eg}{\emph{e.g.}\xspace}
\newcommand{\etal}{\emph{et al.}\xspace}
\newcommand{\scal}{\boldsymbol{\cdot}}
\newcommand{\grad}{\boldsymbol{\nabla}}
\newcommand{\od}[2]{\frac{\mathrm{d}\, #1}{\mathrm{d}\/#2}}
\newcommand{\Prod}[2]{\left\langle\, #1\,,\,#2\,\right\rangle}
\newcommand{\eqdef}{\mathop{\stackrel{\,\mathrm{def}}{:=}\,}}
\newcommand{\vdd}[2]{\dfrac{\delta #1}{\delta\hspace{0.0556em} #2}}
\newcommand{\half}{{\textstyle{1\over2}}}
\newcommand{\third}{{\textstyle{1\over3}}}
\renewcommand*\env@matrix[1][\arraystretch]{%
  \edef\arraystretch{#1}%
  \hskip -\arraycolsep
  \let\@ifnextchar\new@ifnextchar
  \array{*\c@MaxMatrixCols c}}
\acrodef{bvp}[BVP]{Boundary Value Problem}
\acrodef{NSWE}{Nonlinear Shallow Water Equations}
\begin{document}

\title[\Title]{On the multi-symplectic structure of Boussinesq-type systems. II: Geometric discretization}

\author[A.~Dur\'an]{Angel Dur\'an}
\address{\textbf{\textcolor{MidnightBlue}{A.~Dur\'an:}} Departamento de Matem\'atica Aplicada, E.T.S.I. Telecomunicaci\'on, Campus Miguel Delibes, Universidad de Valladolid, Paseo de Belen 15, 47011 Valladolid, Spain}
\email{angel@mac.uva.es\vspace*{0.5em}}

\author[D.~Dutykh]{Denys Dutykh$^*$}
\address{\textbf{\textcolor{MidnightBlue}{D.~Dutykh:}} Univ. Grenoble Alpes, Univ. Savoie Mont Blanc, CNRS, LAMA, 73000 Chamb\'ery, France and LAMA, UMR 5127 CNRS, Universit\'e Savoie Mont Blanc, Campus Scientifique, F-73376 Le Bourget-du-Lac Cedex, France}
\email{Denys.Dutykh@univ-smb.fr}
\urladdr{http://www.denys-dutykh.com/\vspace*{0.5em}}
\thanks{$^*$ Corresponding author}

\author[D.~Mitsotakis]{Dimitrios Mitsotakis}
\address{\textbf{\textcolor{MidnightBlue}{D.~Mitsotakis:}} Victoria University of Wellington, School of Mathematics and Statistics, PO Box 600, Wellington 6140, New Zealand}
\email{dmitsot@gmail.com}
\urladdr{http://dmitsot.googlepages.com/}

\keywords{geometric numerical integration; symplectic methods; multi-symplectic schemes; Boussinesq equations; surface waves}


\begin{titlepage}
\clearpage
\pagenumbering{arabic}
\thispagestyle{empty} 
\noindent
{\Large Angel \textsc{Dur\'an}}\\
{\it\textcolor{gray}{Universidad de Valladolid, Spain}}
\\[0.02\textheight]
{\Large Denys \textsc{Dutykh}}\\
{\it\textcolor{gray}{CNRS, Universit\'e Savoie Mont Blanc, France}}
\\[0.02\textheight]
{\Large Dimitrios \textsc{Mitsotakis}}\\
{\it\textcolor{gray}{Victoria University of Wellington, New Zealand}}
\\[0.16\textheight]

\vspace*{0.99cm}

\colorbox{Lightblue}{
  \parbox[t]{1.0\textwidth}{
    \centering\huge\sc
    \vspace*{0.75cm}
    
    \textcolor{bluepigment}{On the multi-symplectic structure of Boussinesq-type systems. II: Geometric discretization}
    
    \vspace*{0.75cm}
  }
}

\vfill 

\raggedleft     
{\large \plogo} 
\end{titlepage}


\thispagestyle{empty} 
\par\vspace*{\fill}   
\begin{flushright} 
{\textcolor{denimblue}{\textsc{Last modified:}} \today}
\end{flushright}


\begin{abstract}

In this paper we consider the numerical approximation of systems of \textsc{Boussinesq}-type to model surface wave propagation. Some theoretical properties of these systems (multi-symplectic and \textsc{Hamiltonian} formulations, well-posedness and existence of solitary-wave solutions) were previously analyzed by the authors in Part I. As a second part of the study, considered here is the construction of geometric schemes for the numerical integration. By using the method of lines, the geometric properties, based on the multi-symplectic and \textsc{Hamiltonian} structures, of different strategies for the spatial and time discretizations are discussed and illustrated.


\bigskip\bigskip
\noindent \textbf{\keywordsname:} geometric numerical integration; symplectic methods; multi-symplectic schemes; Boussinesq equations; surface waves \\

\smallskip
\noindent \textbf{MSC [2010]:} 76B15 (primary), 76B25 (secondary)
\smallskip \\
\noindent \textbf{PACS [2010]:} 47.35.Bb (primary), 47.35.Fg (secondary)

\end{abstract}


\newpage
\maketitle
\thispagestyle{empty}


\clearpage
\thispagestyle{empty}
\tableofcontents


\clearpage
\vspace*{0.25em}
\section{Introduction}

In a previous paper, \cite{Duran2019}, the authors considered systems of \textsc{Boussinesq}-type in the form
\begin{eqnarray}
  \eta_{\,t}\ +\ [\,u\ +\ \A\,(\eta\,,\,u)\ +\ a\,u_{\,x\,x}\ -\ b\,\eta_{\,x\,t}\,]_{\,x}\ &=&\ 0\,, \label{eq:5s} \\
  u_{\,t}\ +\ \bigl[\,\eta\ +\ \B\,(\eta\,,\,u)\ +\ c\,\eta_{\,x\,x}\ -\ d\,u_{\,x\,t}\,\bigr]_{\,x}\ &=&\ 0\,, \label{eq:6s}
\end{eqnarray}
with constant parameters $a\,$, $b\,$, $c\,$, $d$ and homogeneous, quadratic nonlinearities
\begin{align*}
  \A\,(\eta\,,\,u)\ &\eqdef\ \alpha_{\,1\,1}\,\eta^{\,2}\ +\ \alpha_{\,1\,2}\,\eta\,u\ +\ \alpha_{\,2\,2}\,u^{\,2}\,, \\
  \B\,(\eta\,,\,u)\ &\eqdef\ \beta_{\,1\,1}\,\eta^{\,2}\ +\ \beta_{\,1\,2}\,\eta\,u\ +\ \beta_{\,2\,2}\,u^{\,2}\,,
\end{align*}
with real coefficients $\alpha_{\,\imath\,\jmath}\,$, $\beta_{\,\imath\,\jmath}$. The parameters $a\,$, $b\,$, $c\,$, $d$ can be defined as, \cite{BCS, Bona2004}
\begin{align*}
  a\ &\eqdef\ \half\;\bigl(\theta^{\,2}\ -\ \third\bigr)\,\nu\,, \qquad
  & b\ \eqdef\ \half\;\bigl(\theta^{\,2}\ -\ \third\bigr)\scal(1\ -\ \nu)\,, \\
  c\ &\eqdef\ \half\;\bigl(1\ -\ \theta^{\,2}\bigr)\,\mu\,, \qquad
  & d\ \eqdef\ \half\;\bigl(1\ -\ \theta^{\,2}\bigr)\scal(1\ -\ \mu)\,,
\end{align*}
where $\nu\,$, $\mu\ \in\ \R$ and $0\ \leq\ \theta\ \leq\ 1\,$. The family of Systems \eqref{eq:5s}, \eqref{eq:6s} can be proposed as approximation to the full \textsc{Euler} equations for surface wave propagation along horizontal, straight channel in the case of long, small amplitude waves and a \textsc{Stokes}--\textsc{Ursell} number \cite{Ursell1953} of {\bf order unity}. The real-valued functions $\eta\ =\ \eta\,(x,\,t)$ and $u\ =\ u\,(x,\,t)$ represent, respectively, the deviation of the free surface at the position $x$ along the channel and time $t$ and the horizontal velocity at $(x,\,t)$ at a nondimensional height $y\ =\ -\,1\ +\ \theta\,(1\ +\ \eta\,(x,\,t))\,$, see \textsc{Bona} \etal \cite{BCS, Bona2004}.

One of the highlights of \cite{Duran2019} is the derivation of models of the form \eqref{eq:5s} with multi-symplectic (MS) structure. A system of partial differential equations (PDEs) is said to be multi-symplectic in one space dimension if it can be written in the form
\begin{equation}\label{eq:ms}
  \K\scal\z_{\,t}\ +\ \M\scal\z_{\,x}\ =\ \grad_{\,\z}\,\S\,(\z)\,, \qquad \z\ \in\ \R^{\,d}\,,
\end{equation}
for some $d\ \geq\ 3\,$, where  $\z\,(x,\,t)\,:\ \R\times\R^{\,+}\ \mapsto\ \R^{\,d}\,$, $\K$ and $\M$ are real, skew-symmetric $d\times d$ matrices, the dot $\scal$ denotes the matrix-vector product in $\R^{\,d}\,$, $\grad_{\,\z}$ is the classical gradient operator in $\R^{\,d}$ and the potential functional $\S\,(\z)\,$ is assumed to be a smooth function of $\z\,$.

The multi-symplectic theory generalizes the classical \textsc{Hamiltonian} formulations, \cite{Basdevant2007}, to the case of PDEs such that the space and time variables are treated on an equal footing. For more details related to the fundamentals of the theory and applications of MS systems we refer to \cite{Bridges1997}. One of the main features of the multi-symplectic formulation is the existence of a multi-symplectic conservation law
\begin{equation}\label{eq:cons}
  \omega_{\,t}\ +\ \kappa_{\,x}\ =\ 0\,,
\end{equation}
where
\begin{equation}\label{eq:def}
  \omega\ \eqdef\ \frac{1}{2}\;\ud\z\,\wedge\,(\K\scal\ud\z)\,, \qquad
  \kappa\ \eqdef\ \frac{1}{2}\;\ud\z\,\wedge\,(\M\scal\ud\z)\,,
\end{equation}
with $\wedge$ being the standard exterior product of differential forms, \cite{Spivak1971}. Note that condition \eqref{eq:def} is local, it does not depend on specific boundary conditions. Additionally, when the function $\S\,(\z)$ does not depend explicitly on $x$ or $t\,$, then local energy and momentum are preserved:
\begin{eqnarray}
  \E_{\,t}\ +\ \F_{\,x}\ &=\ 0\,, \label{eq:cons2a} \\
  \I_{\,t}\ +\ \Gm_{\,x}\ &=\ 0\,. \label{eq:cons2b}
\end{eqnarray}
Defining the generalized energy $\E$ and generalized momentum $\I$ densities as
\begin{equation*}
 \E\,(\z)\ \eqdef\ \S\,(\z)\ -\ \half\;\Prod{\z}{\M\scal\z_{\,x}}\,, \qquad
 \I\,(\z)\ \eqdef\ \half\;\Prod{\z}{\K\scal\z_{\,x}}\,,
\end{equation*}
and corresponding fluxes
\begin{equation*}
 \F\,(\z)\ \eqdef\ \half\;\Prod{\z}{\M\scal\z_{\,t}}\,, \qquad
 \Gm\,(\z)\ \eqdef\ \S\,(\z)\ -\ \half\;\Prod{\z}{\K\scal\z_{\,t}}\,.
\end{equation*}
The symbol $\Prod{\scal}{\scal}$ denotes the standard scalar product in $\R^{\,d}\,$. Throughout the paper, $\Id_{\,n}$ will stand for the $n\times n$ identity matrix.

For the specific case of the present study, it was shown in \cite{Duran2019} that when
\begin{equation}\label{eq:const}
  a\ =\ c\,, \qquad \alpha_{\,1\,2}\ =\ 2\,\beta_{\,1\,1}\,, \qquad \beta_{\,1\,2}\ =\ 2\,\alpha_{\,2\,2}\,,
\end{equation}
then Equations \eqref{eq:5s}, \eqref{eq:6s} can be formulated in the MS form \eqref{eq:ms} for $d\ =\ 10\,$,
\begin{equation}\label{eq:K}
  \K\ =\ \begin{pmatrix}[1.1]
    0 & \half & -\half\;b & 0 & 0 & 0 & 0 & 0 & 0 & 0 \\
    -\half & 0 & 0 & 0 & 0 & 0 & 0 & 0 & 0 & 0 \\
    \half\;b & 0 & 0 & 0 & 0 & 0 & 0 & 0 & 0 & 0 \\
    0 & 0 & 0 & 0 & 0 & 0 & 0 & 0 & 0 & 0 \\
    0 & 0 & 0 & 0 & 0 & 0 & 0 & 0 & 0 & 0 \\
    0 & 0 & 0 & 0 & 0 & 0 & \half & -\half\;d & 0 & 0 \\
    0 & 0 & 0 & 0 & 0 & -\half & 0 & 0 & 0 & 0 \\
    0 & 0 & 0 & 0 & 0 & \half\;d & 0 & 0 & 0 & 0 \\
    0 & 0 & 0 & 0 & 0 & 0 & 0 & 0 & 0 & 0 \\
    0 & 0 & 0 & 0 & 0 & 0 & 0 & 0 & 0 & 0
  \end{pmatrix}\,,
\end{equation}
\begin{equation}\label{eq:M}
  \M\ =\ \begin{pmatrix}[1.1]
    0 & 0 & 0 & -\half\;b & 0 & 0 & 0 & a & 0 & 0 \\
    0 & 0 & 0 & 0 & -1 & 0 & 0 & 0 & 0 & 0 \\
    0 & 0 & 0 & 0 & 0 & -c & 0 & 0 & 0 & 0 \\
    \half\;b & 0 & 0 & 0 & 0 & 0 & 0 & 0 & 0 & 0 \\
    0 & 1 & 0 & 0 & 0 & 0 & 0 & 0 & 0 & 0 \\
    0 & 0 & c & 0 & 0 & 0 & 0 & 0 & -\half\;d & 0 \\
    0 & 0 & 0 & 0 & 0 & 0 & 0 & 0 & 0 & -1 \\
    -a & 0 & 0 & 0 & 0 & 0 & 0 & 0 & 0 & 0 \\
    0 & 0 & 0 & 0 & 0 & \half\;d & 0 & 0 & 0 & 0 \\
    0 & 0 & 0 & 0 & 0 & 0 & 1 & 0 & 0 & 0
  \end{pmatrix}\,,
\end{equation}
and
\begin{multline}\label{eq:pot}
  \S\,(\z)\ \eqdef\ p_{\,1}\,\eta\ -\ \eta\,u\ -\ \third\;\alpha_{\,1\,1}\,\eta^{\,3}\ -\ \beta_{\,1\,1}\,\eta^{\,2}\,u\ -\ \half\;\beta_{\,1\,2}\,\eta\,u^{\,2}\ +\ \half\;b\,v_{\,1}\,w_{\,1} \\ 
  -\ \third\;\beta_{\,2\,2}\,u^{\,3}\ +\ \half\;d\,v_{\,2}\,w_{\,2}\ -\ a\,v_{\,1}\,v_{\,2}\ +\ p_{\,2}\,u\,.
\end{multline}
where
\begin{equation*}\label{eq:zdef}
  \z\ \eqdef\ \bigl(\,\eta\,,\,\phi_{\,1},\,v_{\,1},\,w_{\,1},\,p_{\,1},\,u,\,\phi_{\,2},\,v_{\,2},\,w_{\,2},\,p_{\,2}\,\bigr)\ \in\ \R^{\,10}\,.
\end{equation*}

Furthermore, when
\begin{equation}\label{eq:const3b}
  \beta_{\,1\,2}\ =\ 2\,\alpha_{\,1\,1}\ =\ 2\,\alpha_{\,2\,2}\,, \qquad
  \alpha_{\,1\,2}\ =\ 2\,\beta_{\,1\,1}\ =\ 2\,\beta_{\,2\,2}\,,
\end{equation}
then the corresponding $(a,\,b,\,a,\,b)$ System~\eqref{eq:5s}, \eqref{eq:6s} is multi-symplectic \emph{and} \textsc{Hamiltonian}:
\begin{equation}\label{eq:ham}
  \begin{pmatrix}
    \eta_{\,t} \\
    u_{\,t}
  \end{pmatrix}\ =\ 
  \J\scal\begin{pmatrix}[2]
    \vdd{\H}{\eta} \\
    \vdd{\H}{u}
 \end{pmatrix}\,,
\end{equation}
on a suitable functional space for $(\eta,\, u)\,$, where the co-symplectic matrix operator $\J$ is given by
\begin{equation}\label{eq:sympl}
  \J\ \eqdef\ \begin{pmatrix}
  0 & -\,\bigl(1\ -\ b\,\partial_{\,x\,x}^{\,2}\bigr)^{\,-\,1}\circ\partial_{\,x} \\
  -\,\bigl(1\ -\ b\,\partial_{\,x\,x}^{\,2}\bigr)^{\,-\,1}\circ\partial_{\,x} & 0
  \end{pmatrix}
\end{equation}
and $\Bigl(\vdd{\H}{\eta},\, \vdd{\H}{u}\Bigr)^{\top}$ denotes {\bf the variational derivative, \cite{Olver1993}}, and the \textsc{Hamiltonian} functional is defined as
\begin{equation}\label{eq:hams}
  \H\ \eqdef\ \frac{1}{2}\;\int_{\,\R}\bigl\{\,\eta^{\,2}\ +\ u^{\,2}\ -\ a\,(\eta_{\,x}^{\,2}\ +\ u_{\,x}^{\,2})\ +\ 2\,\Gg\,(\eta\,,\,u)\,\bigr\}\;\ud x\,,
\end{equation}
with
\begin{equation}\label{eq:gham}
  \Gg\,(\eta\,,\,u)\ \eqdef\ \frac{\beta_{\,1\,1}}{3}\;\eta^{\,3}\ +\ \frac{\beta_{\,1\,2}}{2}\;\eta^{\,2}\,u\ +\ \beta_{\,1\,1}\,\eta\,u^{\,2}\ +\ \frac{\beta_{\,1\,2}}{6}\;u^{\,3}\,.
\end{equation}
Additional conserved quantities are
\begin{equation}\label{eq:quadraticq}
  \I\ \eqdef\ \;\int_{\,\R}\bigl\{\,\eta\,u\ +\ b\,\eta_{\,x}\,u_{\,x} \,\bigr\}\;\ud x\,,
\end{equation}
\begin{equation}\label{eq:linearq}
  {C}_{\,1}(\eta,\,u)\ \eqdef\ \;\int_{\,\R}\,\eta\;\ud x\,, \qquad {C}_{\,2}(\eta,u)\ \eqdef\ \;\int_{\,\R}\,u\;\ud x\,.
\end{equation}

Similarly to the symplectic integration to approximate \textsc{Hamiltonian} ordinary differential equations, \cite{Sanz-Serna1994}, the construction and analysis of multi-symplectic methods for PDEs with MS structure appeared as a natural way to include the geometric numerical integration, \cite{Hairer2002}, as essential part of the numerical treatment of the multi-symplectic theory,  \cite{Bridges2001, Moore2003a, Chen2011, Dutykh2013a}. As mentioned in \cite{Reich2000}, a first numerical approach of MS type was derived by \textsc{Marsden} \etal \cite{Marsden1998}, by discretizing the \textsc{Lagrangian} of the \textsc{Cartan} form in field theory which appears in the MS structure for wave equations. A new definition is due to \textsc{Bridges} \& \textsc{Reich}, \cite{Bridges2001}, who consider a numerical scheme as multi-symplectic if it preserves a discrete version of the corresponding conservation law \eqref{eq:cons}. This definition agrees with that in \cite{Marsden1998} when the governing equations are given by a first-order \textsc{Lagrangian}.

Several properties of these geometric integrators have been pointed out (see \eg \cite{Bridges2006} and references therein). The most remarkable may be their behaviour with respect to local conservation laws, the preservation, in some cases and in the sense described in \cite{Ascher2004}, of a discrete version of the dispersion relation of the corresponding PDE and, finally, the development of a Backward Error Analysis (BEA) for MS integrators, \cite{Moore2003}. In particular, the numerical dispersion relations of some multi-symplectic methods do not contain spurious roots and preserve the sign of the group velocity, \cf \cite{Frank2006}.

The literature includes the construction of MS schemes for nonlinear dispersive equations like, among others, the celebrated \textsc{Korteweg-de Vries} (KdV) equation, the Nonlinear \textsc{Schr\"{o}dinger} (NLS) equation, the \textsc{Zakharov}--\textsc{Kuznetsov} and shallow water equations and the `good' scalar \textsc{Boussinesq} equation (see \cite{Bridges2006} for more cases). These references seem to distinguish two lines of research for constructing multi-symplectic integrators. The first one (see \eg \cite{Ascher2004, Ascher2005, Bridges2001, Dutykh2013a, Sun2004, Li2013, Huang2003}) is based on applying symplectic integrators in time and in space (not necessarily the same). This seems to be the natural discretization of the MS property, which is translated to a discrete MS conservation law and some other consequences, where the time and space variables are treated, as in the continuous case, on equal footing. As mentioned in \cite{McLachlan2014}, this approach generates several drawbacks that must be overcome. The main ones concern the inclusion of boundary conditions (which breaks the initially equal treatment of the independent variables) and the proper definition of a numerical method after the symplectic discretization. These drawbacks are particularly relevant for traditional families of symplectic integrators like the \textsc{Gau\ss}--\textsc{Legendre} \textsc{Runge}--\textsc{Kutta} (GLRK) methods, \cf \cite{Hairer2002}. More recently, the use of the \textsc{Lobatto} IIIA--IIIB Partitioned \textsc{Runge}--\textsc{Kutta} (PRK) methods to multi-symplectic systems was studied in \cite{Ryland2008}: sufficient conditions on the system for a spatial discretization with these methods to lead to explicit semi-discrete system of Ordinary Differential Equations (ODE) are discussed, and different properties (multi-symplectic character, dispersion analysis and global error) are studied. The PRK semi-discretization avoids the singularity in cases like the nonlinear wave equation, the NLS equation or the `good' \textsc{Boussinesq} equation \cite{Ryland2008}, and  the dispersion relation does not contain spurious waves, although only covers a discontinuous part of the continuous frequency.

A second approach in constructing MS integrators may be represented \eg by the references \cite{Bridges2001a, Chen2001, Islas2003a, Islas2006}, where a discrete multi-symplectic property of a discretization based on Fourier pseudospectral approximation in space and a symplectic time integration is analyzed in different PDEs with MS structure. This approach pays attention to the boundary conditions (of periodic type in the cases treated in the previous references), showing how they force to treat time and space variables in a different way. The meaning of the MS character is also adapted through corresponding discrete MS conservation laws, local conserved quantities and dispersion analysis.

These two approaches inspired the work developed in the present paper, whose main results are now highlighted:
\begin{itemize}
  
  \item The paper introduces the numerical approximation to the periodic Initial-Value Problem (IVP) of \eqref{eq:5s}, \eqref{eq:6s} from the point of view of the geometric integration. The choice of this type of boundary conditions is motivated by mainly two reasons: their typical use in the literature when implementing MS integrators (see \eg \cite{Bridges2001a, Reich2000a}) and the experimental context where the \textsc{Boussinesq} equations \eqref{eq:5s}, \eqref{eq:6s} may be considered (\eg dynamics of waves or comparisons with the \textsc{Euler} system, see \cite{Duran2019}) and for which periodic boundary conditions are usually imposed in the simulations. A first task treated here is the extension of the multi-symplectic and \textsc{Hamiltonian} structures of the initial-value problem for the corresponding families of Systems \eqref{eq:5s}, \eqref{eq:6s}. In the first case, since symplecticity is a local concept, the MS conservation law does not depend on specific boundary conditions, \cite{Reich2000a}, but other related properties, such as the preservation of the total symplecticity or the global energy and momentum, do and this is first studied for the case of the \textsc{Boussinesq}-type systems.

  \item The paper will then discuss the properties and drawbacks presented when the first approach in constructing MS integrators, above mentioned, is applied to the periodic IVP for \eqref{eq:5s}, \eqref{eq:6s}. Different properties will be illustrated by considering the approximation given by the Implicit Midpoint Rule (IMR). Among them, the requirement, proved in \cite{McLachlan2014}, of odd number of discretization points when the MS system is discretized in space by a GLRK method, in order to have a well-defined semi-discrete ODE system, is emphasized. The use of PRK methods is also studied. Here the MS formulation of \eqref{eq:5s}, \eqref{eq:6s} will be shown to not satisfy the conditions derived in \cite{Ryland2008} (\ie conditions that imply that the resulting ODE system from the spatial discretization with the \textsc{Lobatto} IIIA--IIIB methods is well defined). Several discretizations with the two-stage scheme of the family and different partitions of System \eqref{eq:5s}, \eqref{eq:6s} were still tried and they required similar conditions to those of the GLRK methods, in order to avoid the singularity.

  \item In view of these drawbacks, the second alternative is analyzed. Our approach here consists of discretizing  the MS formulation of \eqref{eq:5s}, \eqref{eq:6s} with a general grid operator approximating the first partial derivative in space and a symplectic integrator in time. Conditions on the spatial grid operator in order to obtain a well-defined semi-discrete ODE system are derived, along with a semi-discrete version of the MS conservation law, the preservation of semi-discrete energy and momentum, a linear dispersion analysis and the preservation of the \textsc{Hamiltonian} structure in the corresponding cases. We observe that some of these results are not exclusive of Equations~\eqref{eq:5s}, \eqref{eq:6s}; they can be applied to a general MS System~\eqref{eq:ms} with periodic boundary conditions and the approach with a general grid operator generalizes the particular case of the pseudospectral discretization typically used in the literature, \cite{Bridges2001a, Chen2001, Islas2003a,Islas2006}. Then the time discretization of the semi-discrete system with a symplectic method is shown to give a corresponding fully discrete conservation law, a numerical dispersion relation and the preservation of global quantities in the \textsc{Hamiltonian} cases. These results will be illustrated with the implicit midpoint rule and several remarks on the generalization to other symplectic integrators will be made.

\end{itemize}

The paper is structured as follows. Section~\ref{sec:sec2} is devoted to the extension of the MS and \textsc{Hamiltonian} structures to the periodic IVP for \eqref{eq:5s}, \eqref{eq:6s}. In Section~\ref{sec:sec3} some properties and drawbacks derived from the discretization with GLRK methods and \textsc{Lobatto} IIIA--IIIB PRK methods are described and illustrated. The alternative of a different numerical treatment of the independent variables, in the sense described above, is analyzed in Section~\ref{sec:sec4}. Conclusions are outlined in Section~\ref{sec:sec5}.


\section{On the periodic initial-value problem}
\label{sec:sec2}

We first study the influence of the imposition of periodic boundary conditions on the dependent variables and their derivatives on the possible multi-symplectic and \textsc{Hamiltonian} structures of Equations~\eqref{eq:5s}, \eqref{eq:6s}. In the case of the MS formulation \eqref{eq:K}, \eqref{eq:M}, \eqref{eq:pot}, recall that the MS conservation law \eqref{eq:cons}, \eqref{eq:def} does not depend on specific boundary conditions. On the other hand, integrating \eqref{eq:cons2a}, \eqref{eq:cons2b} on a period interval $]\,0,\,L\,[\,$, periodic boundary conditions imply the preservation of global energy and momentum
\begin{equation}\label{eq:globalq}
  \Ec_{\,L}\ \eqdef\ \int_{\,0}^{\,L}\,\E\,(\z)\:\ud x\,, \qquad 
  \Ic_{\,L}\ \eqdef\ \int_{\,0}^{\,L}\,\I\,(\z)\:\ud x\,.
\end{equation}
These can be written in terms only of $\eta$ and $u$ as
\begin{align}
  \Ec_{\,L}\ &\eqdef \ \int_{\,0}^{\,L}\,\bigl(-\,\eta\,u\ +\ a\,\eta_{\,x}\,u_{\,x}\ -\ \S_{\,L}\,(\eta,\,u)\bigr)\:\ud x\,, \label{eq:globalq1} \\
  \Ic_{\,L}\ &\eqdef \frac{1}{2}\;\int_{\,0}^{\,L}\,\bigl(\eta^{\,2}\ +\ u^{\,2}\ +\ b\,\eta_{\,x}^{\,2}\ +\ d\,u_{\,x}^{\,2}\bigr)\:\ud x\,, \label{eq:globalq2}
\end{align}
where
\begin{equation}\label{eq:globalq3}
  \S_{\,L}\,(\eta,\,u)\ \eqdef \ \third\;\alpha_{\,1\,1}\,\eta^{\,3}\ +\ \beta_{\,1\,1}\,\eta^{\,2}\,u\ +\ \half\;\beta_{\,1\,2}\,\eta\,u^{\,2}\ +\ \third\;\beta_{\,2\,2}\,u^{\,3}\,.
\end{equation}

Note also that the integration of \eqref{eq:cons} on $]\,0,\,L\,[$ and the periodic boundary conditions imply the preservation of the total symplecticity $\partial_{\,t}\bar{\omega}\ \equiv\ 0$ where
\begin{equation}\label{eq:tsymp}
  \bar{\omega}\ \eqdef\ \int_{\,0}^{\,L}\,\omega\:\ud x\,.
\end{equation}
As in the case of the IVP, when \eqref{eq:const3b} holds, the periodic IVP of the $(a,\,b,\,a,\,b)$ System~\eqref{eq:5s}, \eqref{eq:6s} admits a \textsc{Hamiltonian} structure \eqref{eq:ham} with respect to \eqref{eq:sympl} and \textsc{Hamiltonian} function
\begin{equation}\label{eq:globalq4}
  \H_{\:L}\ \eqdef\ \frac{1}{2}\;\int_{\,0}^{\,L}\bigl\{\,\eta^{\,2}\ +\ u^{\,2}\ -\ a\,(\eta_{\,x}^{\,2}\ +\ u_{\,x}^{\,2})\ +\ 2\,\Gg\,(\eta\,,\,u)\,\bigr\}\;\ud x\,,
\end{equation}
with $\Gg$ given by \eqref{eq:gham}. The corresponding versions of the invariants \eqref{eq:quadraticq} and \eqref{eq:linearq} are 
\begin{equation}\label{eq:globalq5}
  \I_{\,L}\ \eqdef\ \;\int_{\,0}^{\,L}\bigl\{\,\eta\,u\ +\ b\,\eta_{\,x}\,u_{\,x} \,\bigr\}\;\ud x\,,
\end{equation}
\begin{equation}\label{eq:globalq6}
  {C}_{\,1\,L}\,(\eta,\,u)\ \eqdef\ \;\int_{\,0}^{\,L}\,\eta\;\ud x\,, \qquad 
  {C}_{\,2\,L}\,(\eta,\,u)\ \eqdef\ \;\int_{\,0}^{\,L}\,u\;\ud x\,.
\end{equation}

\begin{remark}\label{r1}
When the IVP of the MS System~\eqref{eq:5s}, \eqref{eq:6s} is considered on the space of smooth functions $(\eta,\,u)$ which vanish, along with their spatial derivatives, at infinity, then the preservation of the global energy and momentum
\begin{equation*}
  \Ec\,(\eta,\,u)\ \eqdef\ \int_{\,\R}\,\left(-\,\eta\,u\ +\ a\,\eta_{\,x}\,u_{\,x}\ -\ \S_{\,L}\,(\eta,\,u)\right)\,\ud x\,,
\end{equation*}
\begin{equation*}
  \Ic\,(\eta,\,u)\ \eqdef\ \frac{1}{2}\;\int_{\,\R}\,\left(\eta^{\,2}\ +\ u^{\,2}\ +\ b\,\eta_{\,x}^{\,2}\ +\ d\,u_{\,x}^{\,2}\right)\,\ud x
\end{equation*}
also holds. In this sense, the solitary wave solutions $\eta_{\,s}\ =\ \eta_{\,s}\,(X)\,$, $u_{\,s}\ =\ u_{\,s}\,(X)\,$, $X\ \eqdef\ x\ -\ c_{\,s}\,t$ of \eqref{eq:5s}, \eqref{eq:6s}, studied in \cite{Duran2019}, can be understood as MS relative equilibria, since the ODE system satisfied by the profiles $\eta_{\,s}\,$
, $u_{\,s}$ (see \cite[Equations~(3.2), (3.3)]{Duran2019}) can be written as
\begin{equation}\label{eq:msre}
  \delta\,\Ec\,(\eta_{\,s},\,u_{\,s})\ +\ c_{\,s}\,\delta\,\Ic\,(\eta_{\,s},\, u_{\,s})\ =\ 0\,,
\end{equation}
where $\delta\,(\scal)\ \eqdef\ \Bigl(\,\vdd{(-)}{\eta},\,\vdd{(-)}{u}\,\Bigr)^{\top}$ is the vector-valued variational derivative.
\end{remark}

\begin{remark}\label{r2}
When the IVP with zero boundary conditions at infinity or the periodic IVP of the $(a,\,b,\,a,\,b)$ System~\eqref{eq:5s}, \eqref{eq:6s} is multi-symplectic and \textsc{Hamiltonian}, we initially have four nonlinear conserved quantities. It is straightforward to see that in those cases
\begin{equation*}
  \delta\,\Ec\ =\ \begin{pmatrix} 0 & -\,1 \\ -\,1 & 0 \end{pmatrix} \delta\,\H\,, \qquad
  \delta\,\Ic\ =\ \begin{pmatrix} 0 & -\,1\\ -\,1 & 0\end{pmatrix} \delta\,\I\,,
\end{equation*}
leading to some functional dependence between $\Ec$ and $\H$ and between $\Ic$ and $\I\,$.
\end{remark}


\section{Symplectic in space and symplectic in time methods}
\label{sec:sec3}

As mentioned in the introduction, the discretization of a MS System~\eqref{eq:ms} with symplectic one-step methods in space and time defines a multi-symplectic integrator in the sense of the preservation of a discrete version of the conservation law \eqref{eq:cons}, \cite{Bridges2001a}. Some properties and drawbacks of this approach have been described in several references, \cite{McLachlan2014}, and the purpose of this section is to analyze its application to the MS systems of the periodic problem for Equations~\eqref{eq:5s}, \eqref{eq:6s}. This will be done by studying the spatial discretization given by the Implicit Midpoint Rule (IMR) and the two-stage \textsc{Lobatto} IIIA--IIIB PRK method. On the one hand, the approximation with the IMR will serve us to illustrate the behaviour of the GLRK methods, typically used in the multi-symplectic integration, \cite{Reich2000a}. On the other hand, the two-stage \textsc{Lobatto} IIIA--IIIB method as spatial integrator will show if the alternative given by this family of PRK methods to generate an explicit semi-discretization, \cite{Ryland2008}, can improve the performance of the GLRK methods in our case.

The following notation will be used to discretize a general MS System~\eqref{eq:ms} with periodic boundary conditions on an interval $]\,0,\,L\,[\,$. For an integer $N\ \geq\ 1$ and a uniform grid $\{x_{\,j}\ =\ j\,h\ \vert\ j\ \in\ \Z\}$ with stepsize $h\ \eqdef\ \frac{L}{N}\,$, we consider the space $S_{\,h}$ of periodic vector functions $\Zz\ =\ (Z_{\,j})_{\,j\,\in\,\Z}$ defined on the grid, with $Z_{\,j}\ \in\ \R^{\,d}\,$, $Z_{\,j\,+\,N}\ =\ Z_{\,j}\,$, $j\ \in\ \Z\,$, while $S_{\,h}^{\,d\cdot N}$ will denote the space of vectors $(Z_{\,0},\,\ldots,\, Z_{\,N\,-\,1})^{\,\top}$ with $\Zz\ =\ (Z_{\,j})_{\,j\,\in\,\Z}\ \in\ S_{\,h}\,$. For the case of \eqref{eq:5s}, \eqref{eq:6s} in the MS form, we will make use of the previous definitions with $d\ =\ 10$ and \eqref{eq:const} -- \eqref{eq:pot}. We define the operators $\Dd_{x}\,$, $\Mm_{x}\,:\ S_{\,h}\ \longrightarrow\ S_{\,h}$ as
\begin{eqnarray}
  (\Dd_{\,x}\, Z)_{\,j}\ \eqdef\ \frac{Z_{\,j\,+\,1}\ -\ Z_{\,j}}{h}\,, \qquad 
  (\Mm_{\,x}\, Z)_{\,j}\ \eqdef\ \frac{Z_{\,j\,+\,1}\ +\ Z_{\,j}}{2}\,, \qquad j\ \in\ \Z\,.\label{eq:msop}
\end{eqnarray}

For $t\ \geq\ 0\,$, let $\z_{\,h}\,(t)\ =\ (\z_{\,h,\,j}\,(t))_{\,j\,\in\,\Z}\ \in\ S_{\,h}$ an approximation to the solution of the $L$-periodic IVP of \eqref{eq:ms} at $(x_{\,j},\,t)\,$, $j\ =\ 0,\,\ldots,\,N\,-\,1\,$. Because of periodicity, $\z_{\,h}\,(t)$ will be sometimes identified by its first $N$ components $\z_{\,h,\,j}\,(t)\,$, $j\ =\ 0,\,\ldots,\,N\,-\,1$ throughout the section. Similarly, for simplicity, $\Mm_{\,x}$ and $\Dd_{\,x}$ will also denote the restriction (with periodic boundary conditions) of the original operators to $S_{\,h}^{\,d\cdot N}$ and on $\R^{\,N}\,$, with matrix representations in this last case given respectively by
\begin{equation}\label{eq:dxmx}
  \widetilde{\Dd_{x}}\ \eqdef\ \frac{1}{h}\;\begin{pmatrix}
    -\,1 & 1 & 0 & \cdots & 0 \\
    0 & -\,1 & 1 & 0 & \cdots \\ 
    \vdots & \vdots & \ddots & \vdots & \vdots \\ 
    \vdots & \vdots & \vdots & \ddots & \vdots \\
    1 & 0 & 0 & \cdots & -\,1
  \end{pmatrix}\,, \qquad
  \widetilde{\Mm_{\,x}}\ \eqdef\ \frac{1}{2}\;\begin{pmatrix}
    1 & 1 & 0 & \cdots & 0 \\
    0 & 1 & 1 & 0 & \cdots \\
    \vdots & \vdots & \ddots & \vdots & \vdots \\ 
    \vdots & \vdots & \vdots & \ddots &\vdots \\
    1 & 0 & 0 & \cdots & 1
  \end{pmatrix}\,.
\end{equation}


\subsection{Discretization with IMR}
\label{sec:glrk}

The spatial discretization of \eqref{eq:ms}, \eqref{eq:K} -- \eqref{eq:pot} with the IMR leads to the semi-discrete system
\begin{equation}\label{eq:msp1}
  \K\scal\od{}{t}\,\bigl(\Mm_{\,x}\scal\z_{\,h}\,(t)\bigr)_{\,j}\ +\ \M\scal\bigl(\Dd_{\,x}\scal\z_{\,h}\,(t)\bigr)_{\,j}\ =\ \grad_{\,\z}\,\S\,\bigl((\Mm_{\,x}\scal\z_{\,h}\,(t)\bigr))_{\,j}\,,
\end{equation}
for $j\ =\ 0,\,\ldots,\,N\,-\,1$ and where periodic boundary conditions are used. By using some properties of the \textsc{Kronecker} product of matrices, denoted by $\otimes\,$, in compact form \eqref{eq:msp1} reads
\begin{equation}\label{eq:msp2}
  \left(\Mm_{\,x}\otimes \K\right)\od{}{t}\,\z_{\,h}\,(t)\ +\ \left(\Dd_{\,x}\otimes\M\right)\z_{\,h}\,(t)\ =\ \grad\,\S\,\bigl(\Mm_{\,x}\scal\z_{\,h}\,(t)\bigr)\,,
\end{equation}
where on the left hand side of \eqref{eq:msp2} $\grad\,\S\,\bigl(\Mm_{\,x}\scal\z_{\,h}\,(t)\bigr)$ stands for the $10\cdot N$ vector with components $\grad_{\,\z}\,\S\,\bigl((\Mm_{\,x}\scal\z_{h}\,(t)\bigr)_{\,j}\ \in\ \R^{\,10}\,$, $j\ =\ 0,\,\ldots,\,N\,-\,1\,$. In terms of the components of $\z_{\,h,\,j}\ =\ (\eta_{\,j},\,\phi_{\,1,\,j},\,v_{\,1,\,j},\,w_{\,1,\,j},\,p_{\,1,\,j},\,u_{\,j},\,\phi_{\,2,\,j},\,v_{\,2,\,j},\,w_{\,2,\,j},\,p_{\,2,\,j})$ the discretization reads
\begin{eqnarray}
  \frac{1}{2}\;\od{}{t}\,\Mm_{\,x}\scal\phi_{\,1}\ -\ \frac{1}{2}\;b\,\od{}{t}\,\Mm_{\,x}\scal v_{\,1}\ +\ a\,\Dd_{\,x}\,v_{\,2}\ -\ \frac{1}{2}\;b\,\Dd_{\,x}\scal w_{\,1}\ &=&\ \Mm_{\,x}\scal p_{\,1}\ -\ \Mm_{\,x}\scal u\nonumber \\
  &&\ -\ \A\,(\Mm_{\,x}\scal\eta\,,\,\Mm_{\,x}\scal u)\,, \nonumber\\
  -\,\frac{1}{2}\;\od{}{t}\,\Mm_{\,x}\scal\eta\ -\ \Dd_{\,x}\scal p_{\,1}\ &=&\ 0\,, \nonumber\\
  \frac{1}{2}\;b\,\od{}{t}\,\Mm_{\,x}\scal\eta\ -\ a\,\Dd_{\,x}\scal u\ &=&\ \frac{1}{2}\;b\,\Mm_{\,x}\scal w_{\,1}\ -\ a\,\Mm_{\,x}\,v_{\,2}\,, \nonumber \\
  \frac{1}{2}\;b\,\Dd_{\,x}\scal\eta\ &=&\ \frac{1}{2}\;b\,\Mm_{\,x}\scal v_{\,1}\,, \label{eq:msp2a}\\
  \Dd_{\,x}\scal\phi_{\,1}\ &=&\ \Mm_{\,x}\scal\eta\,, \nonumber \\
  \frac{1}{2}\;\od{}{t}\,\Mm_{\,x}\scal\phi_{\,2}\ -\ \frac{1}{2}\;d\,\od{}{t}\,\Mm_{\,x}\scal v_{\,2}\ +\ c\,\Dd_{\,x}\scal v_{\,1}\ -\ \frac{1}{2}\;d\,\Dd_{\,x}\scal w_{\,2}\ &=&\ \Mm_{\,x}\scal p_{\,2}\ -\ \Mm_{\,x}\scal\eta \nonumber \\
  &&\ -\ \B\,(\Mm_{\,x}\scal\eta\,,\,\Mm_{\,x}\scal u)\,, \nonumber \\
  -\,\frac{1}{2}\;\od{}{t}\,\Mm_{\,x}\scal u\ -\ \Dd_{\,x}\scal p_{\,2}\ &=&\ 0\,, \nonumber \\
  \frac{1}{2}\;d\,\od{}{t}\Mm_{\,x}\scal u\ -\ a\,\Dd_{\,x}\scal\eta\ &=&\ \frac{1}{2}\;d\,\Mm_{\,x}\scal w_{\,2}\ -\ c\,\Mm_{\,x}\scal v_{\,1}\,, \nonumber \\
  \frac{1}{2}\;d\,\Dd_{\,x}\scal u\ &=&\ \frac{1}{2}\;d\,\Mm_{\,x}\scal v_{\,2}\,, \nonumber \\
  \Dd_{\,x}\scal\phi_{\,2}\ &=&\ \Mm_{\,x}\scal u\,.\nonumber
\end{eqnarray}
(Subindices were dropped for the sake of notation compactness.) A direct simplification of the variables $\phi_{\,j}\,$, $v_{\,j}\,$, $w_{\,j}\,$, $p_{\,j}\,$, $j\ =\ 1,\,2$ in \eqref{eq:msp2a} leads to
\begin{eqnarray}
  \left(\Mm_{\,x}^{\,3}\ -\ b\,\Dd_{\,x}^{\,2}\scal\Mm_{\,x}\right)\scal\od{}{t}\,\eta_{\,h}\ +\ a\,\Dd_{\,x}^{\,3}\scal u_{\,h}\ +\ \Mm_{\,x}^{\,2}\scal\Dd_{\,x}\scal u_{\,h} && \nonumber \\
  +\ \Dd_{\,x}\scal \Mm_{\,x}\scal\A\,(\Mm_{\,x}\scal\eta_{\,h}\,,\,\Mm_{\,x}\scal u_{\,h})\ &=&\ 0\,. \label{eq:msp3b} \\
  \left(\Mm_{\,x}^{\,3}\ -\ d\,\Dd_{\,x}^{\,2}\scal\Mm_{\,x}\right)\scal\od{}{t}\,u_{\,h}\ +\ a\,\Dd_{\,x}^{\,3}\scal\eta_{\,h}\ +\ \Mm_{\,x}^{\,2}\scal\Dd_{\,x}\scal\eta_{\,h} && \nonumber \\
  +\ \Dd_{\,x}\scal\Mm_{\,x}\scal\B\,(\Mm_{\,x}\scal\eta_{\,h}\,,\,\Mm_{\,x}\scal u_{\,h})\ &=&\ 0\,,\label{eq:msp4b}
\end{eqnarray}
where $\eta_{\,h}\ =\ (\eta_{\,h,\,0},\,\ldots,\,\eta_{\,h,\,N\,-\,1})^{\,\top}\,$, $u_{\,h}\ =\ (u_{\,h,\,0},\,\ldots,\,u_{\,h,\,N\,-\,1})^{\,\top}\,$. Note then that, in order for \eqref{eq:msp3b}, \eqref{eq:msp4b} to define an explicit ODE system, an odd number $N$ of nodes is required, since the operator $\Mm_{\,x}$ is invertible only in that case. This is how \cite[Theorem~2.1]{McLachlan2014}, concerning the spatial discretization of MS systems with GLRK methods, is illustrated in our case. The corresponding semi-discrete version of the MS conservation law \eqref{eq:cons} has the form
\begin{equation*}
  \od{}{t}\bigl((\Mm_{\,x}\scal \ud\z)_{\,j}\,\wedge\,(\Mm_{\,x}\otimes\K)\ud\z)_{\,j}\bigr)\ +\ \Dd_{\,x}\scal\bigl(\ud\z_{\,j}\,\wedge\,\M\scal\ud\z_{\,j}\bigr)\ =\ 0\,.
\end{equation*}

When $N$ is odd and the ODE System~\eqref{eq:msp3b}, \eqref{eq:msp4b} is integrated in time by a symplectic method, the result is a MS scheme in the sense that it preserves the corresponding fully discrete version of \eqref{eq:cons}, \cf \cite{Bridges2001}. Thus, for example, the \textsc{Preissman} Box scheme is derived when the time integrator is also the IMR,
\begin{equation}\label{eq:msp}
  \K\scal\Dd_{\,t}\scal\Mm_{\,x}\scal\z_{\,i}^{\,n}\ +\ \M\scal\Dd_{\,x}\scal\Mm_{\,t}\scal\z_{\,i}^{\,n}\ =\ \grad_{\,\z}\,\S\,\bigl(\Mm_{\,x}\scal\Mm_{\,t}\,\z_{\,i}^{\,n}\bigr)\,.
\end{equation}
where $\z^{\,n}\ \in\ S_{\,h}\,$, $n\ =\ 0,\,1,\,\ldots$ and
\begin{equation}\label{eq:msopb}
  \Dd_{\,t}\scal\z_{\,i}^{\,n}\ \eqdef\ \frac{\z_{\,i}^{\,n\,+\,1}\ -\ \z_{\,i}^{\,n}}{\Delta t}\,, \qquad 
  \Mm_{\,t}\scal\z_{\,i}^{\,n}\ \eqdef\ \frac{\z_{\,i}^{\,n\,+\,1}\ +\ \z_{\,i}^{\,n}}{2}\,,
\end{equation}
with $\Delta t$ as the time stepsize. A similar strategy to that in \cite{Ascher2004, Islas2005} for the KdV equation and NLS equation can be applied here to derive an equivalent version to \eqref{eq:msp}:
\begin{multline}
  \Dd_{\,t}\scal\Mm_{\,x}^{\,3}\scal\eta\ -\ b\,\Dd_{\,t}\scal\Dd_{\,x}^{\,2}\scal\Mm_{\,x}\scal\eta\ +\ a\,\Dd_{\,x}^{\,3}\scal\Mm_{\,t}\scal u\ +\ \Mm_{\,x}^{\,2}\scal\Mm_{\,t}\scal\Dd_{\,x}\scal u \\
  +\ \Dd_{\,x}\scal\Mm_{\,x}\scal\A\,(\Mm_{\,x}\scal\Mm_{\,t}\scal\eta\,,\,\Mm_{\,x}\scal\Mm_{\,t}\scal u)\ =\ 0\,. \label{eq:msp3}
\end{multline}
\begin{multline}
  \Dd_{\,t}\scal\Mm_{\,x}^{\,3}\scal u\ -\ d\,\Dd_{\,t}\scal\Dd_{\,x}^{\,2}\scal\Mm_{\,x}\scal u\ +\ a\,\Dd_{\,x}^{\,3}\scal\Mm_{\,t}\scal\eta\ +\ \Mm_{\,x}^{\,2}\scal\Mm_{\,t}\scal\Dd_{\,x}\scal\eta \\
  +\ \Dd_{\,x}\scal\Mm_{\,x}\scal\B\,(\Mm_{\,x}\scal\Mm_{\,t}\scal\eta\,,\,\Mm_{\,x}\scal\Mm_{\,t}\scal u)\ =\ 0\,, \label{eq:msp4}
\end{multline}
which shares the same discrete MS conservation law
\begin{equation}\label{eq:conl}
  \Dd_{\,t}\scal\left(\Mm_{\,x}\scal\ud\z_{\,i}^{\,n}\,\wedge\,\K\scal\Mm_{\,x}\scal\ud\z_{\,i}^{\,n}\right)\ +\ \Dd_{\,x}\scal\left(\Mm_{\,t}\scal\ud\z_{\,i}^{\,n}\,\wedge\,\M\scal\Mm_{\,t}\scal\ud\z_{\,i}^{\,n}\right)\ =\ 0\,.
\end{equation}
Additional properties of \eqref{eq:msp1} and \eqref{eq:msp} are described below.


\subsubsection{Dispersion relation}

The first property is concerned with the dispersion relation of \eqref{eq:ms} or, equivalently, of the MS system
\begin{equation}\label{eq:ms2}
  \K\scal\z_{\,t}\ +\ \M\scal\z_{\,x}\ =\ \L\scal\z\,,
\end{equation}
where $\K$, $\M$ are given by \eqref{eq:K}, \eqref{eq:M} and
\begin{equation*}\label{eq:L}
  \L\ =\ \begin{pmatrix}[1.1]
    0 & 0 & 0 & 0 & 1 & -1 & 0 & 0 & 0 & 0 \\
    0 & 0 & 0 & 0 & 0 & 0 & 0 & 0 & 0 & 0 \\
    0 & 0 & 0 & \half b & 0 & 0 & 0 & -a & 0 & 0 \\
    0 & 0 & \half b & 0 & 0 & 0 & 0 & 0 & 0 & 0 \\
    1 & 0 & 0 & 0 & 0 & 0 & 0 & 0 & 0 & 0 \\
    -1 & 0 & 0 & 0 & 0 & 0 & 0 & 0 & 0 & 1 \\
    0 & 0 & 0 & 0 & 0 & 0 & 0 & 0 & 0 & 0 \\
    0 & 0 & -a & 0 & 0 & 0 & 0 & 0 & \half d & 0 \\
    0 & 0 & 0 & 0 & 0 & 0 & 0 & \half d & 0 & 0 \\
    0 & 0 & 0 & 0 & 0 & 1 & 0 & 0 & 0 & 0
  \end{pmatrix}\,,
\end{equation*}
is the (symmetric) matrix corresponding to the linear part of $\grad_{\,\z}\,\S\,(\z)$. The dispersion relation between frequency $\omega$ and wavenumber $k$ of \eqref{eq:ms2} reads
\begin{equation}\label{eq:dispr1}
  D\,(\omega,\,k)\ \eqdef\ {\rm det}(-\,\ui\,\omega\,\K\ +\ \ui\,k\,\M\ -\ \L)\ =\ 0\,,
\end{equation}
that is \cite{BCS}:
\begin{equation}\label{eq:dispr}
  \omega^{\,2}\,(k)\ =\ \frac{k^{\,2}\,(1\ +\ a\,k^{\,2})^{\,2}}{(1\ +\ b\,k^{\,2})(1\ +\ d\,k^{\,2})}\,.
\end{equation}
Now, those results in the literature concerning the numerical dispersion relation for GLRK methods can be applied to our case, see \cite{McLachlan2014, Frank2006}. Thus, for each $(\omega,\,k)$ satisfying \eqref{eq:dispr}, the IMR semi-discrete system \eqref{eq:msp1} has a discrete periodic solution
\begin{equation*}
  \z_{\,j}\,(t)\ =\ \ue^{\,\ui\,(\omega\,t\ +\ \xi\,j\,h)}\,a\,, \qquad a\ \in\ \R^{\,10},\, \qquad j\ =\ 0,\,\ldots,\,N\,-\,1\,,
\end{equation*}
with 
\begin{equation*}
  \frac{k\,h}{2}\ =\ \tan\frac{\xi\,h}{2}\,, \qquad \xi\,L\ =\ 2\,\pi\,p\,, \qquad \xi h\ \in\ ]\,l,\,l\,+\,2\,\pi\,p\,[\,,
\end{equation*}
for some $l\ \in\ \R\,$. Furthermore (see \cite[Corollary~2.4]{McLachlan2014}), the semi-discretization preserves the entire dispersion relation up to a diffeomorphic remapping of frequencies, for all $h\,$, with no parasitic waves, as well as the sign of the phase and group velocities. If the ODE system (with $N$ odd) is integrated by a GLRK method, then the corresponding numerical dispersion relation can be studied by using the results of \cite{Frank2006}. This is illustrated here with the Preissman scheme \eqref{eq:msp}. The preservation of \eqref{eq:dispr} is in the following sense, \cite{Ascher2004,Bridges2001, Bridges2006}: there are diffeomorphisms $\psi_{\,1}\,$, $\psi_{\,2}$ which conjugate the exact and the numerical dispersion relations such that to each pair $(\xi,\,\Omega)$ satisfying the numerical dispersion relation there corresponds a pair $(\psi_{\,1}\,(\xi),\,\psi_{\,2}\,(\Omega))$ satisfying the exact dispersion relation. By dropping the nonlinear terms in \eqref{eq:msp3}, \eqref{eq:msp4} both equations can be simplified to
\begin{eqnarray}
  \Dd_{\,t}^{\,2}\scal\Mm_{\,x}^{\,2}\bigl(\Mm_{\,x}^{\,2}\ -\ b\,\Dd_{\,x}^{\,2}\bigr)\scal\bigl(\Mm_{\,x}^{\,2} - d\,\Dd_{\,x}^{\,2}\bigr)\scal\eta\ -\ \Dd_{\,x}^{\,2}\scal\Mm_{\,t}^{\,2}\scal\bigl(\Mm_{\,x}^{\,2}\ +\ a\,\Dd_{\,x}^{\,2}\bigr)^{\,2}\scal\eta\ =\ 0\,. \label{eq:msp5}
\end{eqnarray}
And substituting $\eta_{\,j}^{\,n}\ =\ \ue^{\,\ui\,(j\,\xi\ +\ n\,\Omega)}$ the corresponding numerical dispersion relation is
\begin{equation*}
  \widetilde{D}\,(\xi,\,\Omega)\ \eqdef\ D\,(\psi_{\,1}\,(\xi),\,\psi_{\,2}\,(\Omega))\ =\ 0\,,
\end{equation*}
where $D$ is given by \eqref{eq:dispr1} and
\begin{equation*}
  \psi_{\,1}\,(\xi)\ \eqdef\ \frac{2}{h}\;\tan\,\Bigl(\frac{\xi}{2}\Bigr)\,, \qquad
  \psi_{\,2}\,(\xi)\ \eqdef\ \frac{2}{\Delta t}\;\tan\,\Bigl(\frac{\xi}{2}\Bigr)\,, \qquad \xi\ \in\ ]\,l,\,l\,+\,2\,\pi\,[\,.
\end{equation*}


\subsubsection{Conservation properties}

The lack of skew-symmetry of the operator $\Dd_{\,x}$ determines the behaviour of \eqref{eq:msp1} and \eqref{eq:msp} with respect to discrete versions of the local energy and momentum, as well as the global quantities. By way of illustration, observe that natural discretizations of \eqref{eq:globalq1}, \eqref{eq:globalq2} are respectively $h\,\Ec_{\,h}$ and $h\,\Ic_{\,h}$ where
\begin{equation*}
  \Ec_{\,h}\,(t)\ \eqdef\ -\,\langle\eta_{\,h}\,(t),\,u_{\,h}\,(t)\rangle\ +\ a\,\langle\widetilde{\Dd_{\,x}}\scal\eta_{\,h}\,(t),\,\widetilde{\Dd_{\,x}}\scal u_{\,h}\,(t)\rangle\ -\ \langle \widetilde{\Gg}(\eta_{\,h}\,(t),\,u_{\,h}\,(t)),\, e_{\,h}\rangle\,,
\end{equation*}
\begin{multline*}
  \Ic_{\,h}\,(t)\ \eqdef\ \frac{1}{2}\;\bigl(\langle\eta_{\,h}\,(t),\, \eta_{\,h}\,(t)\rangle\ +\ \langle u_{\,h}\,(t),\, u_{\,h}\,(t)\rangle\\ 
  +\ b\,\langle\widetilde{\Dd_{\,x}}\scal\eta_{\,h}\,(t),\,\widetilde{\Dd_{\,x}}\scal\eta_{\,h}\,(t)\rangle\ +\ d\,\langle\widetilde{\Dd_{\,x}}\scal u_{\,h}\,(t),\, \widetilde{\Dd_{\,x}}\scal u_{\,h}\,(t)\rangle\bigr)\,,
\end{multline*}
where ${e}_{\,h}$ denotes the $N-$vector with all components equal to one and $\widetilde{\Gg}$ represents the corresponding version of \eqref{eq:globalq3} with the products understood in the \textsc{Hadamard} sense. We note then that the non skew-symmetry of $\widetilde{\Dd}_{\,x}$ in \eqref{eq:dxmx} prevents the preservation of $h\,\Ec_{\,h}$ and $h\,\Ic_{\,h}\,$. The same argument applies to the preservation of the natural discretization $h\,\I_{\,h}$ of the quadratic quantity $\I_{\,L}$ in \eqref{eq:globalq5} for the \textsc{Hamiltonian} case, where
\begin{equation*}
  \I_{\,h}\,(t)\ \eqdef\ \langle \eta_{\,h}\,(t),\, u_{\,h}\,(t)\rangle\ +\ b\,\langle \widetilde{\Dd_{\,x}}\scal\eta_{\,h}\,(t),\, \widetilde{\Dd_{\,x}}\scal u_{\,h}\,(t)\rangle\,.
\end{equation*}
Also because of the non skew-symmetry of $\Dd_{\,x}\,$, the \textsc{Hamiltonian} structure \eqref{eq:ham} is broken by the semi-discretization \eqref{eq:msp1}.

Finally, note that
\begin{equation}\label{eq:kernel}
  \widetilde{\Dd_{\,x}}^{\,\top}{e}_{\,h}\ =\ 0\,.
\end{equation}
Therefore, if we define
\begin{equation}\label{eq:lineard}
  C_{\,1,\,h}\,(t)\ =\ \langle \eta_{\,h}\,(t),\, {e}_{\,h}\rangle\,, \qquad
  C_{\,2,\,h}\,(t)\ =\ \langle u_{\,h}\,(t),\,{e}_{\,h}\rangle\,,
\end{equation}
then it is not hard to see that \eqref{eq:kernel} and the commutativity of $\Dd_{\,x}$ with the inverses of $\bigl(\Mm_{\,x}^{\,3}\ -\ b\,\Dd_{\,x}^{\,2}\scal\Mm_{\,x}\bigr)$ and $\bigl(\Mm_{\,x}^{\,3}\ -\ d\,\Dd_{\,x}^{\,2}\scal\Mm_{\,x}\bigr)$ imply
\begin{equation*}
  \od{}{t}\;C_{\,1,\,h}\,(t)\ =\ \od{}{t}\;C_{\,2,\,h}\,(t)\ =\ 0\,.
\end{equation*}
The preservation of the corresponding fully discrete versions of \eqref{eq:lineard} also holds when any symplectic time integrator (actually, any \textsc{Runge}--\textsc{Kutta} integrator) is applied, \cf \cite{Hairer2002}.


\subsection{Discretization with PRK methods}
\label{sec:prk}

The use of symplectic PRK methods as an alternative to construct MS schemes was recently proposed in \cite{Ryland2008, McLachlan2014}. One of the reasons for that is in the possibility of avoiding the singular character of the ODE system after spatial discretization. This was analyzed in \cite{Ryland2008}, where conditions on the MS system for the family of \textsc{Lobatto} IIIA-IIIB PRK methods to lead to an explicit semi-discrete system were derived. These conditions (see \cite[Theorem~4.1]{Ryland2008}) are concerned with the structure of the MS system \eqref{eq:ms} and are not satisfied by corresponding one to the \textsc{Boussinesq} equations \eqref{eq:5s}, \eqref{eq:6s}. Specifically, the \textsc{Darboux} normal form of the matrix $\K$ is
\begin{equation*}\label{eq:Darboux}
   \begin{pmatrix}[1.1]
    0 & 0 & 0 & 0 & -1 & 0 & 0 & 0 & 0 & 0 \\
    0 & 0 & 0 & 0 & 0 & -1 & 0 & 0 & 0 & 0 \\
    0 & 0 & 0 & 0 & 0 & 0 & 0 & 0 & 0 & 0 \\
    0 & 0 & 0 & 0 & 0 & 0 & 0 & 0 & 0 & 0 \\
    1 & 0 & 0 & 0 & 0 & 0 & 0 & 0 & 0 & 0 \\
    0 & 1 & 0 & 0 & 0 & 0 & 0 & 0 & 0 & 0 \\
    0 & 0 & 0 & 0 & 0 & 0 & 0 & 0 & 0 & 0 \\
    0 & 0 & 0 & 0 & 0 & 0 & 0 & 0 & 0 & 0 \\
    0 & 0 & 0 & 0 & 0 & 0 & 0 & 0 & 0 & 0 \\
    0 & 0 & 0 & 0 & 0 & 0 & 0 & 0 & 0 & 0
  \end{pmatrix}\,.
\end{equation*}
However, the corresponding change of coordinates does not transform \eqref{eq:ms} into a system of the form required by the result in \cite{Ryland2008}. Thus the use of PRK methods to generate MS schemes in our case is an open question. A first approach was developed here, taking the two-stage \textsc{Lobatto} IIIA-IIIB method to discretize in space the system with several partitions. We observed that the form of the matrices \eqref{eq:K}, \eqref{eq:M} forces somehow to include the variables $v_{\,i}\,$, $w_{\,i}\,$, $i\ =\ 1,\,2$ into the same vector of any partition, and this complicates the derivation of a nonsingular ODE after spatial discretization. This can be illustrated by taking
\begin{equation*}
  \z^{\,(1)}\ =\ (\eta,\,\phi_{\,1},\,p_{\,1},\,u,\,\phi_{\,2},\,p_{\,2})\,, \qquad
  \z^{\,(2)}\ =\ (v_{\,1},\,w_{\,1},\,v_{\,2},\,w_{\,2})\,,
\end{equation*}
for which the discretization with the two-stage \textsc{Lobatto} IIIA-IIIB method leads to the system
\begin{eqnarray*}
  a\,\Dd_{\,x}\scal(v_{\,2})_{\,j\,-\,1/2}\ -\ \frac{1}{2}\;b\,\Dd_{\,x}\scal(w_{\,1})_{\,j\,-\,1/2} &=& (p_{\,1})_{\,j}\ -\ u_{\,j}\ -\ \frac{1}{2}\;\od{}{t}\;(\phi_{\,1})_{\,j} \nonumber \\
 && -\ \frac{1}{2}\;b\,\od{}{t}\;\Mm_{\,x}\scal(v_{\,1})_{\,j\,-\,1/2}\ -\ \A\,(\eta_{\,j}\,,\,u_{\,j})\,, \nonumber \\
  -\ \Dd_{\,x}\scal(p_{\,1})_{\,j} &=& \frac{1}{2}\;\od{}{t}\;\Mm_{\,x}\scal\eta_{\,j}\,, \nonumber \\
  -\ a\,\Dd_{\,x}\scal u_{\,j} &=& \frac{1}{2}\;b\,\Mm_{\,x}\scal(w_{\,1})_{\,j}\ -\ a\,\Mm_{\,x}\scal(v_{\,2})_{\,j}\ -\ \frac{1}{2}\;b\,\od{}{t}\;\Mm_{\,x}\scal\eta_{\,j}\,, \nonumber \\
  \frac{1}{2}\;b\,\Dd_{\,x}\scal\eta_{\,j} &=& \frac{1}{2}\;b\,\Mm_{\,x}\scal(v_{\,1})_{\,j}\,, \label{eq:msp2c} \\
  \Dd_{\,x}\scal(\phi_{\,1})_{\,j} &=& \Mm_{\,x}\scal\eta_{\,j}\,, \nonumber \\
  a\,\Dd_{\,x}\scal(v_{\,1})_{\,j\,-\,1/2}\ -\ \frac{1}{2}\;d\,\Dd_{\,x}\scal(w_{\,2})_{\,j\,-\,1/2} &=& (p_{\,2})_{\,j}\ -\ \eta_{\,j}\ -\ \frac{1}{2}\;\od{}{t}\;(\phi_{\,2})_{\,j} \nonumber \\
  && -\ \frac{1}{2}\;d\,\od{}{t}\;\Mm_{\,x}\scal(v_{\,d})_{\,j\,-\,1/2} -\ \B\,(\eta_{\,j}\,,\,u_{\,j})\,, \nonumber \\
   -\ \Dd_{\,x}\scal(p_{\,2})_{\,j} &=& \frac{1}{2}\;\od{}{t}\;\Mm_{\,x}\scal u_{\,j}\,, \nonumber \\
   -\ a\,\Dd_{\,x}\scal\eta_{\,j} &=& \frac{1}{2}\;d\,\Mm_{\,x}\scal(w_{\,2})_{\,j}\ -\ a\,\Mm_{\,x}\scal(v_{\,1})_{\,j}\ -\ \frac{1}{2}\;d\,\od{}{t}\;\Mm_{\,x}\scal u_{\,j}\,, \nonumber \\
  \frac{1}{2}\;d\,\Dd_{\,x}\scal u_{\,j} &=& \frac{1}{2}\;d\,\Mm_{\,x}\scal(v_{\,2})_{\,j}\,,\label{eq:msp2d}\\
  \Dd_{\,x}\scal(\phi_{\,2})_{\,j} &=& \Mm_{\,x}\scal u_{\,j}\,, \nonumber 
\end{eqnarray*}
which, after simplifications, has the form
\begin{equation}
  \bigl(\Mm_{\,x}\ -\ b\,\Dd_{\,x}\scal\Dd_{\,c}\bigr)\scal\od{}{t}\;\eta_{\,j}\ +\ a\,\Dd_{\,x}^{\,3}\scal u_{\,j\,-\,1}\ +\ \Dd_{\,x}\scal u_{\,j}\ +\ \Dd_{\,x}\scal\A\,(\eta_{\,j}\,,\,u_{\,j})\ =\ 0\,. \label{eq:msp3c}
\end{equation}
\begin{equation}
  \bigl(\Mm_{\,x}\ -\ d\,\Dd_{\,x}\scal\Dd_{\,c}\bigr)\scal\od{}{t}\;u_{\,j}\ +\ a\,\Dd_{\,x}^{\,3}\scal\eta_{\,j\,-\,1}\ +\ \Dd_{\,x}\scal\eta_{\,j}\ +\ \Dd_{\,x}\scal\B\,(\eta_{\,j}\,,\,u_{\,j})\ =\ 0\,. \label{eq:msp4c}
\end{equation}
where
\begin{equation*}
  (\Dd_{\,c}\scal Z)_{\,j}\ \eqdef\ \frac{Z_{\,j\,+\,1}\ -\ Z_{\,j\,-\,1}}{2\,h}\,,
\end{equation*}
and it can be seen that the matrices $\bigl(\Mm_{\,x}\ -\ b\,\Dd_{\,x}\scal\Dd_{\,c}\bigr)\,$, $\bigl(\Mm_{\,x}\ -\ d\,\Dd_{\,x}\scal\Dd_{\,c}\bigr)$ are nonsingular only when $N$ is odd, since $\Dd_{\,x}\scal\Dd_{\,c}\ =\ \Mm_{\,x}\scal\Dd_{\,x}^{\,2}\,$. This leads to a similar condition to that of the GLRK methods. No further approaches that may improve these results are however discarded.

\begin{remark}\label{r3}
One of these approaches may start from the natural partition
\begin{equation*}
  \z^{\,(1)}\ =\ (\eta,\,\phi_{\,1},\,v_{\,1},\,w_{\,1},\,p_{\,1})\,, \qquad
  \z^{\,(2)}\ =\ (u,\,\phi_{\,2},\,v_{\,2},\,w_{\,2},\,p_{\,2})\,,
\end{equation*}
which allows to write \eqref{eq:5s}, \eqref{eq:6s} in the form
\begin{equation}\label{eq:parti1}
  \begin{pmatrix}
    K\,(b) & 0 \\
    0 & K\,(d)
  \end{pmatrix}
  \partial_{\,t}\,\begin{pmatrix}
    \z^{\,(1)} \\
    \z^{\,(2)}
  \end{pmatrix}\ +\ 
  \begin{pmatrix}
    M\,(b) & M_{\,0} \\
    -\,M_{\,0} & M\,(d)
  \end{pmatrix}
  \partial_{\,x}\,
  \begin{pmatrix}
    \z^{\,(1)} \\
    \z^{\,(2)}
  \end{pmatrix}\ =\ 
  \begin{pmatrix}
    \grad_{\,\z^{\,(1)}}\,\S\,(\z^{\,(1)},\,z^{\,(2)}) \\
    \grad_{\,\z^{\,(2)}}\,\S\,(\z^{\,(1)},\,z^{\,(2)})
  \end{pmatrix}\,,
\end{equation}
where
\begin{equation*}
  K\,(\alpha)\ =\ \begin{pmatrix}
    0 & 1/2 & -\,\alpha/2 & 0 & 0 \\
    -\,1/2 & 0 & 0 & 0 & 0 \\
    \alpha/2 & 0 & 0 & 0 & 0 \\
    0 & 0 & 0 & 0 & 0 \\
    0 & 0 & 0 & 0 & 0 
  \end{pmatrix}\,, \quad
  M\,(\alpha)\ =\ \begin{pmatrix}
    0 & 0 & 0 & -\,\alpha/2 & 0 \\
    0 & 0 & 0 & 0 & -\,1 \\
    0 & 0 & 0 & 0 & 0 \\
    \alpha/2 & 0 & 0 & 0 & 0 \\
    0 & 1 & 0 & 0 & 0 
  \end{pmatrix}\,,
\end{equation*}
\begin{equation*}
  M_{\,0}\ =\ \begin{pmatrix}
    0 & 0 & a & 0 & 0 \\
    0 & 0 & 0 & 0 & 0 \\
    -\,a & 0 & 0 & 0 & 0 \\
    0 & 0 & 0 & 0 & 0 \\
    0 & 0 & 0 & 0 & 0 
  \end{pmatrix}\,.
\end{equation*}
The special form of \eqref{eq:parti1} may work out well for the application of another strategy with symplectic PRK methods.
\end{remark}


\section{An alternative approach to MS discretizations}
\label{sec:sec4}

An alternative to construct geometric numerical methods for the periodic IVP of the MS System \eqref{eq:5s}, \eqref{eq:6s} is described in this section. As mentioned in the introduction, this is based on the search for approximate operators to the spatial partial derivative in order to obtain well-defined and geometric (in the numerical sense, \cite{Hairer2002}) semi-discretizations and their numerical integration with symplectic in time schemes. The approach generalizes that of previous references, \cite{Bridges2001a, Chen2001, Islas2006}, with spectral methods and some results can be extended to a general MS system \eqref{eq:ms}. They will be mentioned in the exposition below.


\subsection{Semidiscretization in space}

Let $\Dd_{\,h}$ be some grid operator on $S_{\,h}^{\,N}$ approximating the first partial derivative in space. The semi-discretization, based on $\Dd_{\,h}\,$, which approximates the MS System \eqref{eq:ms} with periodic boundary conditions will have the form
\begin{equation}\label{eq:msp11}
  \K\scal\od{}{t}\;\bigl(\z_{\,h}\,(t)\bigr)_{\,j}\ +\ \M\scal\bigl(\Cc_{\,h}\scal\z_{\,h}\,(t)\bigr)_{\,j}\ =\ \grad_{\,\z}\,\S\,\bigl((\z_{\,h}\,(t))_{\,j}\bigr)\,,
\end{equation}
for $j\ =\ 0,\,\ldots,\,N\,-\,1\,$, where periodic boundary conditions are used and with $\Cc_{\,h}\ \eqdef\ \Dd_{\,h}\,\otimes\,\Id_{\,d}\,$. As in Section~\ref{sec:sec3}, the application of properties of the \textsc{Kronecker} product leads to the compact form of \eqref{eq:msp11}
\begin{equation}\label{eq:msp22}
 \bigl(\Id_{\,N}\,\otimes\,\K\bigr)\scal\od{}{t}\;\z_{\,h}\,(t)\ +\ \bigl(\Dd_{\,h}\,\otimes\,\M\bigr)\scal\z_{\,h}\,(t)\ =\ \grad\,\S\,\bigl(\Mm_{\,x}\scal\z_{\,h}\,(t)\bigr)\,,
\end{equation}
where on the left hand side of \eqref{eq:msp2} $\grad\,\S\,\bigl(\z_{\,h}\,(t)\bigr)$ stands for the $d\cdot N$ vector with components $\grad_{\,\z}\,\S\,\bigl((\z_{\,h}\,(t))_{\,j}\bigr)\ \in\ \R^{\,d}\,$, $j\ =\ 0,\,\ldots,\,N\,-\,1\,$.

The multi-symplectic character of the approximation \eqref{eq:msp22} is understood as follows \cite{Bridges2001}: if $U,\,V\ \in\ \R^{\,d\cdot N}$ are solutions of the variational equation associated to \eqref{eq:msp11}, then, using the skew-symmetry of $\M\,$, we have
\begin{equation}\label{eq:sdcl}
  \partial_{\,t}\,\bigl\langle\,\bigl(\Id_{\,N}\,\otimes\,\K\bigr)\scal U,\,V\,\bigr\rangle\ +\ \bigl\langle\,\bigl(\Dd_{\,h}\,\otimes\,\M\bigr)\scal U,\,V\,\bigr\rangle\ +\ \bigl\langle\,\bigl(\Dd_{\,h}^{\,\top}\,\otimes\,\M\bigr)\scal U,\,V\,\bigr\rangle\ =\ 0\,.
\end{equation}
Equation \eqref{eq:sdcl} is the MS conservation law preserved by \eqref{eq:msp22}. Note that, due to the structure of $\Id_{\,N}\,\otimes\,\K\,$, we have
\begin{equation*}
  \bigl\langle\,\bigl(\Id_{\,N}\,\otimes\,\K\bigr)\scal U,\,V\,\bigr\rangle\ =\ \sum_{j\,=\,0}^{N\,-\,1}\,\bigl\langle\,\K\scal U_{\,j},\,V_{\,j}\,\bigr\rangle\,,
\end{equation*}
where $U\ =\ (U_{\,j})_{\,j\,=\,0}^{\,N\,-\,1}\,$, $V\ =\ V_{\,j})_{\,j\,=\,0}^{\,N\,-\,1}\,$, $U_{\,j},\,V_{\,j}\ \in\ \R^{\,d}\,$. Consequently, if $\Dd_{\,h}$ is skew-symmetric, then \eqref{eq:sdcl} implies
\begin{equation*}
  \partial_{\,t}\,\sum_{j\,=\,0}^{N\,-\,1}\,\omega_{\,j}\ =\ 0\,, \qquad \omega_{\,j}\ \eqdef\ \bigl\langle\,\K\scal U_{\,j},\,V_{\,j}\,\bigr\rangle\,,
\end{equation*}
representing the semi-discrete version of the preservation of the total symplecticity $\bar{\omega}$ in \eqref{eq:tsymp}.

\begin{remark}\label{r4}
The requirement $\Dd_{\,h}^{\,\top}\ =\ -\,\Dd_{\,h}$ will be then the first property in this approach in order to construct MS discretizations. In this sense, two classical examples are introduced. The first one is the operator of central differences
\begin{equation}\label{eq:e1}
  (\Dd_{\,h}\scal V)_{\,j}\ \eqdef\ \frac{V_{\,j\,+\,1}\ -\ V_{\,j\,-\,1}}{2\,h}\,, \qquad j\ =\ 0,\,\ldots,\,N\,-\,1\,,
\end{equation} 
for $V\ =\ (V_{\,0},\,\ldots,\,V_{\,N\,-\,1})^{\,\top}\ \in\ S_{\,h}^{\,N}$ (the boundary conditions are used in \eqref{eq:e1} when defining the components $j\ =\ 0,\,N\,-\,1$) and matrix representation
\begin{equation}\label{eq:mre1}
  \widetilde{\Dd}_{\,h}= \frac{1}{2\,h}\;\begin{pmatrix}
    0 & 1 & 0 & \cdots & -\,1 \\
    -\,1 & 0 & 1 & 0 & \cdots \\
    \vdots & \vdots & \ddots & \vdots & \vdots \\
    \vdots & \vdots & \vdots & \ddots & \vdots \\
    1 & 0 & \cdots & -\,1 & 0
\end{pmatrix}.
\end{equation}
The second example of grid operator is given by the pseudospectral discretization. For each $Z\ \in\ S_{\,h}^{\,N}\,$, let $\widehat{Z}_{\,p}$ be the $p$\up{th}$-$discrete \textsc{Fourier} coefficient
\begin{eqnarray*}
  \widehat{Z}\,({p})=\frac{1}{N}\;\sum_{0\,\leq\,j\,\leq\,N}^{\prime\,\prime}\,Z_{\,j}\ue^{\,-\,\ui\,s\,p\,j\,h}\,, \qquad -\,\frac{N}{2}\ \leq\ p\ \leq\ \frac{N}{2}\,, \label{alm31}
\end{eqnarray*}
where $s\ \eqdef\ \frac{2\pi}{L}$ and the double prime in the sum denotes that the first and the last terms are divided by two. One can reconstruct $Z$ from the discrete \textsc{Fourier} coefficients by evaluating at the grid points the trigonometric interpolant polynomial
\begin{eqnarray}
  Z_{\,h}\,(x)\ =\ \sum_{-\,N/2\ \leq\ p\ \leq\ N/2}^{\prime\,\prime}\,\widehat{Z}\,({s\,p})\,\ue^{\,\ui\,p\,x}\,, \label{alm32}
\end{eqnarray}
in such a way that $Z_{\,j}\ =\ Z_{\,h}\,(x_{\,j})\,$. The pseudospectral differentiation operator on $Z$ is defined by differentiating \eqref{alm32} with respect to $x$ and evaluating at the $x_{\,j}\,$:
\begin{equation}\label{eq:alm33}
  (\Dd_{\,h}\scal Z)_{\,j}\ \eqdef\ \sum_{-\,N/2\, \leq\, p\, \leq\,N/2}^{\prime\,\prime}\,(\ui\,s\,p)\,\widehat{Z}\,(p)\ue^{\,\ui\,s\,p\,j\,h}\,, \qquad j\ \in\ \Z\,,
\end{equation}
which, in terms of the discrete \textsc{Fourier} coefficients, reads
\begin{eqnarray*}
  \widehat{(\Dd_{\,h}\,Z)}\,({p})\ =\ (\ui\,s\,p)\,\widehat{Z}\,({p})\,, \qquad -\,N/2\ \leq\ p\ \leq\ N/2\,. \label{alm34}
\end{eqnarray*}
\end{remark}

\begin{remark}\label{r5}
The general semi-discretization \eqref{eq:msp22}, along with the MS conservation law \eqref{eq:sdcl}, can be applied in particular to System~\eqref{eq:5s}, \eqref{eq:6s}, written in the MS form \eqref{eq:const} -- \eqref{eq:pot}. Note that after the direct simplification of the variables $\phi_{\,j}\,$, $v_{\,j}\,$, $w_{\,j}\,$, $p_{\,j}\,$, $j\ =\ 1,\,2$ the system \eqref{eq:msp11} can be written as
\begin{eqnarray}
  \bigl(\Id_{\,N}\ -\ b\,\Dd_{\,h}^{\,2}\bigr)\scal\od{}{t}\;\eta_{\,h}\ +\ \Dd_{\,h}\scal\bigl(\Id_{\,N}\ +\ a\,\Dd_{\,h}^{\,2}\bigr)\scal u_{\,h}\ +\ \Dd_{\,h}\scal\A\,(\eta_{\,h}\,,\,u_{\,h})\ &=&\ 0\,,\label{eq:msp33b} \\
  \bigl(\Id_{\,N}\ -\ d\,\Dd_{\,h}^{\,2}\bigr)\scal\od{}{t}\;u_{\,h}\ +\,\Dd_{\,h}\scal\bigl(\Id_{\,N}\ +\ a\,\Dd_{\,h}^{\,2}\bigr)\scal\eta_{\,h}\ +\ \Dd_{\,h}\scal\B\,(\eta_{\,h}\,,\,u_{\,h})\ &=&\ 0\,, \label{eq:msp44b}
\end{eqnarray}
where $\eta_{\,h}\ \eqdef\ (\eta_{\,h,\,0},\,\ldots,\,\eta_{\,h,\,N\,-\,1})^{\,\top}\,$, $u_{\,h}\ \eqdef\ (u_{\,h,\,0},\,\ldots,\,u_{\,h,\,N\,-\,1})^{\,\top}\,$. Thus, in order for \eqref{eq:msp33b}, \eqref{eq:msp44b} to define an explicit ODE system for the approximations $\eta_{\,h}\,$, $u_{\,h}\,$, the grid operator $\Dd_{\,h}$ must be chosen in such a way that
\begin{equation}\label{eq:nsing}
  \Nn_{\,h}\,(\alpha)\ \eqdef\ \Id_{\,N}\ -\ \alpha\,\Dd_{\,h}^{\,2}\,, \qquad \alpha\ >\ 0\,,
\end{equation}
is invertible in $S_{\,h}^{\,N}\,$. It is straightforward to check that \eqref{eq:nsing} is satisfied by the two examples of $\Dd_{\,h}$ described in Remark \ref{r4}.
\end{remark}

In order to provide additional geometric properties to the semi-discretization \eqref{eq:msp22}, some new conditions on $\Dd_{\,h}$ are required. This will be described in the following sections.


\subsubsection{Dispersion relation}
\label{sec:412}

We assume that $\omega\,$, $k$ and $a\ \in\ \R^{\,d}$ satisfy the plane wave solution
\begin{equation}\label{eq:dispr1b}
  (-\,\ui\,\omega\,\K\ +\ \ui\,k\,\M\ -\ \L)\,a\ =\ 0\,,
\end{equation}
where $\L\ \eqdef\ \S^{\prime\prime}\,(\z)\,$. Now we look for solutions of the linearized system from \eqref{eq:ms} of the form
\begin{equation}\label{eq:soldisp1}
  \z_{\,h,\,j}\,(t)\ =\ \ue^{\,\ui\,(\omega\,t\ +\ \xi\,j\,h)}\,a\,.
\end{equation}
Note that in order to have $\z_{\,h}\,(t)\ \in\ S_{\,h}\,$, we need $\ue^{\,\ui\,N\,\xi\,h}\ \equiv\ 1\,$, that is
\begin{equation*}\label{soldisp2}
  \xi\ =\ \xi_{\,m}\ \equiv\ \frac{2\,\pi}{L}\;m\,, \qquad m\ \in\ \Z\,.
\end{equation*}
Susbstitution of \eqref{eq:soldisp1} into \eqref{eq:msp11} leads to
\begin{equation*}
  \bigl(\ui\,\omega\,\K\ +\ (\Dd_{\,h}\scal \ep)_{\,j}\,\overline{\ep}_{\,j}\scal\M\ -\ \L\bigr)\,a\ =\ 0\,,
\end{equation*}
where $\ep\ \eqdef\ \bigl(\,\ue^{\,\ui\,\xi\,h\,j}\,\bigr)_{\,j\,=\,0}^{\,N\,-\,1}\,$. Then, the dispersion relation yields
\begin{equation*}
  (\Dd_{\,h}\,\ep)_{\,j}\ =\ \ui\,k\,\ep_{\,j}\,, \qquad j\ =\ 0,\,\ldots,\,N\,-\,1\,,
\end{equation*}
that is
\begin{equation}\label{eq:soldisp3}
  (\Dd_{\,h}\ -\ \lambda\,\Id_{\,N})\scal\ep\ =\ 0, \qquad \lambda\ =\ \ui\,k\,.
\end{equation}
Note that if \eqref{eq:soldisp3} holds, then in particular
\begin{equation*}
  \ui\,k\ =\ (\Dd_{\,h}\,\ep)_{\,1}\overline{\ep}_{\,1}\,.
\end{equation*}
Thus, the identification of $\ep$ as eigenvector of $\Dd_{\,h}$ associated to the eigenvalue $\lambda\ =\ \ui\,k$ leads to some relation $k\ =\ k\,(\xi)\,$, which depends on the choice of $\Dd_{\,h}\,$. For the examples presented in Remark~\ref{r4}, one can check that $k\ =\ k\,(\xi)\ =\ \frac{\sin\,\xi\,h}{h}$ for \eqref{eq:e1} and $k\ =\ k\,(\xi)\ =\ \xi$ for \eqref{eq:alm33}.


\subsubsection{Preservation of local conservation laws}

In this section we study the behaviour of the semi-discretization \eqref{eq:msp11} with respect to the local energy and momentum. We first make the following definitions. For $Q:\ S_{\,h}^{\,d\cdot N}\ \longrightarrow\ \R$ smooth
\begin{equation}\label{eq:dgrad1}
  \grad_{\,h}\,Q\,(\z)\ \eqdef\ Q^{\,\prime}\,(\z)\,\Cc_{\,h}\scal\z\ =\ \grad\,Q\,(\z)^{\,\top}\,\Cc_{\,h}\scal\z\,, \qquad \z\ \in\ S_{\,h}^{\,d\cdot N}\,,
\end{equation}
where $\Cc_{\,h}\ \eqdef\ \Dd_{\,h}\,\otimes\,\Id_{\,d}$ (For the case of \eqref{eq:5s}, \eqref{eq:6s}, $d\ =\ 10\,$.) The operator $\grad_{\,h}$ induces an operator (denoted in the same way for simplicity) on $S_{\,h}^{\,d}$ in the sense that if $\z\ =\ (\z_{\,j})_{\,j\,=\,0}^{\,N\,-\,1}\ \in\ S_{\,h}^{\,d\scal N}$ with $\z_{\,j}\ \in\ S_{\,h}^{\,d}$ and $Q\,:\ S_{\,h}^{\,d}\ \longrightarrow\ \R$ is smooth then we can define
\begin{equation}\label{eq:dgrad2}
  \grad_{\,h}\,Q\,(\z_{\,j})\ \eqdef\ Q^{\,\prime}\,(\z_{\,j})\,(\Cc_{\,h}\scal\z)_{\,j}\ =\ \grad\,Q\,(\z_{\,j})^{\,\top}\,(\Cc_{\,h}\scal\z)_{\,j}\,.
\end{equation}

On the other hand, if $\S\,:\ \R^{\,d}\ \longrightarrow\ \R\,$, we define $\widetilde{\S}_{\,h}\,:\ \R^{\,d\cdot N}\ \longrightarrow\ \R$ as
\begin{equation}\label{eq:dG}
  \widetilde{\S}_{\,h}\,(\z)\ =\ \sum_{j\,=\,0}^{N\,-\,1}\, \S\,(\z_{\,j})\,, \qquad \z\ =\ (\z_{\,j})_{j\,=\,0}^{N\,-\,1}\ \in\ S_{\,h}^{\,d\cdot N}\,.
\end{equation}
The operators \eqref{eq:dgrad1}, \eqref{eq:dgrad2} and \eqref{eq:dG} are necessary here and in the following subsection.

\begin{theorem}\label{th1}
Let $\z_{\,h}\,(t)\ =\ (\z_{\,h,\,j}\,(t))_{\,j\,=\,0}^{\,N\,-\,1}\ \in\ S_{\,h}^{\,d\cdot N}$ be a solution of \eqref{eq:msp22}. For $j\ =\ 0,\,\ldots,\,N\,-\,1$ define
\begin{eqnarray}
  \E_{\,j}\ &\eqdef&\ \S\,\bigl(\z_{\,h,\,j}\,(t)\bigr)\ -\ \frac{1}{2}\;(\z_{\,h,\,j}\,(t))^{\,\top}\,\M\scal\bigl(\Cc_{\,h}\scal\z_{\,h}\,(t)\bigr)_{\,j}\,, \nonumber \\
  \F_{\,j}\ &\eqdef&\ \frac{1}{2}\;\bigl(\z_{\,h,\,j}\,(t)\bigr)^{\,\top}\M\scal\od{\z_{\,h,\,j}}{t}\;(t)\,, \nonumber \\
  \I_{\,j}\ &\eqdef&\ \frac{1}{2}\;(\z_{\,h,\,j}\,(t))^{\,\top}\;\K\scal(\Cc_{\,h}\scal\z_{\,h}\,(t))_{\,j}\,, \nonumber \\
  \Gm_{\,j}\ &\eqdef&\ \S\,(\z_{\,h,\,j}\,(t))\ -\ \frac{1}{2}\;(\z_{\,h,\,j}\,(t))^{\,\top}\;\K\scal\od{\z_{\,h,\,j}}{t}\;(t)\,. \label{eq:sdlcl1}
\end{eqnarray}
Then 
\begin{equation}\label{eq:sdlcl2}
  \od{\E_{\,j}}{t}\ +\ \grad_{\,h}\,\F_{\,j}\ =\ 0\,, \quad  
  \od{\I_{\,j}}{t}\ +\ \grad_{\,h}\,\Gm_{\,j}\ =\ 0\,.
\end{equation}
where $\grad_{\,h}$ is given by \eqref{eq:dgrad2}.
\end{theorem}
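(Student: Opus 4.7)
The plan is to mimic, at a purely algebraic level, the classical derivation of the continuous local energy and momentum conservation laws of the multi-symplectic formalism. The only tools needed are the semi-discrete MS equation \eqref{eq:msp11}, the skew-symmetry of $\K$ and $\M\,$, and the chain rule; the grid operator $\Cc_{\,h}$ enters only through bookkeeping and never through summation by parts, since both identities in \eqref{eq:sdlcl2} are pointwise in $j\,$ and no hypothesis on $\Dd_{\,h}$ beyond smoothness is used.

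For the first identity, I would differentiate $\E_{\,j}$ in $t$ and substitute $\grad_{\,\z}\,\S\,(\z_{\,h,\,j})\ =\ \K\scal\dot{\z}_{\,h,\,j}\ +\ \M\scal(\Cc_{\,h}\scal\z_{\,h})_{\,j}$ from \eqref{eq:msp11} inside the term $\grad\,\S\,(\z_{\,h,\,j})^{\,\top}\,\dot{\z}_{\,h,\,j}\,$. The quadratic form $\dot{\z}_{\,h,\,j}^{\,\top}\,\K\scal\dot{\z}_{\,h,\,j}$ vanishes by skew-symmetry of $\K\,$, leaving
\begin{equation*}
  \od{\E_{\,j}}{t}\ =\ \frac{1}{2}\,\dot{\z}_{\,h,\,j}^{\,\top}\,\M\scal(\Cc_{\,h}\scal\z_{\,h})_{\,j}\ -\ \frac{1}{2}\,\z_{\,h,\,j}^{\,\top}\,\M\scal(\Cc_{\,h}\scal\dot{\z}_{\,h})_{\,j}\,.
\end{equation*}
On the flux side, I would read $\grad_{\,h}\,\F_{\,j}$ as the chain-rule $x$-derivative of $\F_{\,j}\ =\ \frac{1}{2}\,\z_{\,h,\,j}^{\,\top}\,\M\scal\dot{\z}_{\,h,\,j}$ viewed as a smooth function of both $\z_{\,h,\,j}$ and $\dot{\z}_{\,h,\,j}\,$; this produces two contributions, $\frac{1}{2}\,(\M\scal\dot{\z}_{\,h,\,j})^{\,\top}\,(\Cc_{\,h}\scal\z)_{\,j}$ and $\frac{1}{2}\,(\M^{\,\top}\scal\z_{\,h,\,j})^{\,\top}\,(\Cc_{\,h}\scal\dot{\z})_{\,j}\,$. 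Applying $\M^{\,\top}\ =\ -\,\M$ in both terms then produces exactly the negative of the right-hand side above, and the first identity follows.

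The momentum identity is handled in parallel. After differentiating $\I_{\,j}$ in time, I compute $\grad_{\,h}\,\Gm_{\,j}$ by chain rule on $\Gm_{\,j}\ =\ \S\,(\z_{\,h,\,j})\ -\ \frac{1}{2}\,\z_{\,h,\,j}^{\,\top}\,\K\scal\dot{\z}_{\,h,\,j}$ and again substitute $\grad\,\S$ from \eqref{eq:msp11}. The decisive additional observation here is that the cross contribution $(\M\scal(\Cc_{\,h}\scal\z_{\,h})_{\,j})^{\,\top}\,(\Cc_{\,h}\scal\z)_{\,j}$ is of the form $v^{\,\top}\,\M\scal v$ with $v\ =\ (\Cc_{\,h}\scal\z)_{\,j}$ and therefore vanishes by skew-symmetry of $\M\,$; only the $\K$ contribution from $\grad\,\S$ survives, and the remaining pieces cancel against $\od{\I_{\,j}}{t}$ by virtue of $\K^{\,\top}\ =\ -\,\K\,$.

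The main obstacle is purely interpretational. Definitions \eqref{eq:dgrad1} -- \eqref{eq:dgrad2} of $\grad_{\,h}$ are stated for functions of $\z$ alone, whereas $\F_{\,j}$ and $\Gm_{\,j}$ depend also on $\dot{\z}_{\,h,\,j}\,$. The cancellations above work only if one reads $\grad_{\,h}$ as the natural chain-rule extension, \ie the discrete analogue of $\partial_{\,x}\,\F\,(\z,\,\z_{\,t})\ =\ (\partial_{\,\z}\,\F)\scal\z_{\,x}\ +\ (\partial_{\,\z_{\,t}}\,\F)\scal\z_{\,t\,x}\,$, so that both the $\z$- and $\dot{\z}$-dependencies contribute through $(\Cc_{\,h}\scal\z)_{\,j}$ and $(\Cc_{\,h}\scal\dot{\z})_{\,j}$ respectively. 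Once this convention is fixed, the proof reduces to signed bookkeeping of terms, and it is consistent with the purely local character of the statement, since no structural property of $\Dd_{\,h}$ (skew-symmetry, invertibility of $\Nn_{\,h}\,(\alpha)$ in \eqref{eq:nsing}, periodicity) is invoked.
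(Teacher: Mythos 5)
Your proposal is correct and follows essentially the same route as the paper's proof: differentiate $\E_{\,j}$ and $\I_{\,j}$ in time, substitute $\grad_{\,\z}\,\S\,(\z_{\,h,\,j})\ =\ \K\scal\dot{\z}_{\,h,\,j}\ +\ \M\scal(\Cc_{\,h}\scal\z_{\,h})_{\,j}$ from \eqref{eq:msp11}, kill the quadratic forms $\dot{\z}^{\,\top}\K\scal\dot{\z}$ and $(\Cc_{\,h}\scal\z)_{\,j}^{\,\top}\,\M\scal(\Cc_{\,h}\scal\z)_{\,j}$ by skew-symmetry, and match the remainder with the chain-rule evaluation of $\grad_{\,h}\,\F_{\,j}$ and $\grad_{\,h}\,\Gm_{\,j}$. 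Your explicit remark that \eqref{eq:dgrad2} must be read as the chain-rule extension to functions depending also on $\dot{\z}_{\,h,\,j}$ (with that dependence contributing through $(\Cc_{\,h}\scal\dot{\z}_{\,h})_{\,j}$) is exactly the convention the paper uses implicitly in its computation, so there is no gap.
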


\begin{proof}
Using the skew-symmetry of $\K$ and $\M$, \eqref{eq:msp11} and \eqref{eq:dgrad2}, we have
\begin{eqnarray*}
  \od{\E_{\,j}}{t}\ &=&\ \grad\,G\,(\z_{\,h,\,j})^{\,\top}\scal\od{\z_{\,h,\,j}}{t}\ -\ \frac{1}{2}\;\Bigl(\od{\z_{\,h,\,j}}{t}\Bigr)^{\,\top}\,\M\scal(\Cc_{\,h}\scal\z_{\,h})_{\,j} \\
  && -\ \frac{1}{2}\;(\z_{\,h,\,j})^{\,\top}\,\M\scal\Bigl(\Cc_{\,h}\scal\od{\z_{\,h}}{t}\Bigr)_{\,j} \\
  &=& \grad\,G\,(\z_{\,h,\,j})^{\,\top}\scal\od{\z_{\,h,\,j}}{t}\ -\ \frac{1}{2}\;\Bigl(\od{\z_{\,h,\,j}}{t}\Bigr)^{\,\top}\scal\Bigl(\grad\,G\,(\z_{\,h,\,j})\ -\ \K\scal\od{\z_{\,h,\,j}}{t}\Bigr) \\
  && -\ \frac{1}{2}\;(\z_{\,h,\,j})^{\,\top}\;\M\scal(\Cc_{\,h}\scal\od{\z_{\,h}}{t})_{\,j} \\
  &=& \frac{1}{2}\;\Bigl(\od{\z_{\,h,\,j}}{t}\Bigr)^{\,\top}\;\grad\,G\,(\z_{\,h,\,j})\ -\ \frac{1}{2}\;\bigl(\Cc_{\,h}\scal\od{\z_{\,h}}{t}\bigr)_{\,j}\;\M^{\,\top}\scal(\z_{\,h,\,j}) \\
  &=& \frac{1}{2}\;\Bigl(\od{\z_{\,h,\,j}}{t}\Bigr)^{\,\top}\bigl(\K\scal\od{\z_{\,h,\,j}}{t}\ +\ \M\scal\bigl(\Cc_{\,h}\scal\z_{\,h}\,(t)\bigr)_{\,j}\bigr) \\
  && -\ \frac{1}{2}\;\Bigl(\Cc_{\,h}\scal\od{\z_{\,h}}{t}\Bigr)_{\,j}\;\M^{\,\top}\scal(\z_{\,h,\,j}) \\
  &=&\ \frac{1}{2}\;\Bigl(\od{\z_{\,h,\,j}}{t}\Bigr)^{\,\top}\;\M\scal\bigl(\Cc_{\,h}\scal\z_{\,h}\,(t)\bigr)_{\,j}\ +\ \frac{1}{2}\;\Bigl(\Cc_{\,h}\scal\od{\z_{\,h}}{t}\Bigr)_{\,j}\;\M\scal(\z_{\,h,\,j})\\
  &=& -\ \grad_{\,h}\,\F_{\,j}\,.
\end{eqnarray*}
Similarly,
\begin{eqnarray*}
  \grad_{\,h}\,\Gm_{\,j}\ &=&\ \grad\,G\,(\z_{\,h,\,j})^{\,\top}\scal(\Cc_{\,h}\scal\z_{\,h})_{\,j}\ -\ \frac{1}{2}\;\Bigl(\Cc_{\,h}\scal\z_{\,h}\Bigr)_{\,j}^{\,\top}\scal\K\scal(\od{\z_{\,h,\,j}}{t}) \\
  && -\ \frac{1}{2}\;(\z_{\,h,\,j})^{\,\top}\scal\K\scal\Bigl(\Cc_{\,h}\scal\od{\z_{\,h}}{t}\Bigr)_{\,j} \\
  &=& (\Cc_{\,h}\scal\z_{\,h})_{\,j}^{\,\top}\,\Bigl(\K\scal\od{\z_{\,h,\,j}}{t}\ +\ \M\scal(\Cc_{\,h}\scal\z_{\,h}\,(t))_{\,j}\Bigr)\ -\ \frac{1}{2}\;(\Cc_{\,h}\scal\z_{\,h})_{\,j}^{\,\top}\scal\K\scal\od{\z_{\,h,\,j}}{t} \\
  && -\ \frac{1}{2}\;(\z_{\,h,\,j})^{\,\top}\;\K\scal\Bigl(\Cc_{\,h}\scal\od{\z_{\,h}}{t}\Bigr)_{\,j} \\
  &=& \frac{1}{2}\;(\Cc_{\,h}\scal\z_{\,h})_{\,j}^{\,\top}\;\K\scal\od{\z_{\,h,\,j}}{t}\ -\ \frac{1}{2}\;(\z_{\,h,\,j})^{\,\top}\,\K\scal\Bigl(\Cc_{\,h}\scal\od{\z_{\,h}}{t}\Bigr)_{\,j} \\
  &=& -\ \frac{1}{2}\;\Bigl(\od{\z_{\,h,\,j}}{t}\Bigr)^{\,\top}\,\K\scal(\Cc_{\,h}\scal\z_{\,h})_{\,j}\ -\ \frac{1}{2}\;(\z_{\,h,\,j})^{\,\top}\,\K\scal\Bigl(\Cc_{\,h}\scal\od{\z_{\,h}}{t}\Bigr)_{\,j}\\ 
  &=& -\ \od{\I_{\,j}}{t}\,.
\end{eqnarray*}
\end{proof}

\begin{remark}\label{r7}
Theorem \ref{th1} applies to \eqref{eq:5s}, \eqref{eq:6s} with $d\ =\ 10$ and the elements of the MS formulation given by \eqref{eq:K} -- \eqref{eq:pot}.
\end{remark}


\subsubsection{Preservation of global quantities and Hamiltonian case}

As mentioned in Section \ref{sec:sec2}, the presence of periodic boundary conditions implies the preservation of the global energy and momentum given by \eqref{eq:globalq1} and \eqref{eq:globalq2} respectively. The semi-discrete version, based on \eqref{eq:msp11}, of this result is as follows.

\begin{theorem}\label{th2}
Let us assume that $\Dd_{\,h}$ is skew-symmetric and $\z_{\,h}\,(t)\ =\ \bigl(\z_{\,h,\,j}\,(t)\bigr)_{\,j\,=\,0}^{\,N\,-\,1}\ \in\ S_{\,h}^{\,d\cdot N}$ be a solution of \eqref{eq:msp22} and define
\begin{equation}\label{eq:sdenergy}
  \Ec_{\,h}\ \eqdef\ \widetilde{\S}_{\,h}\,(\z_{\,h}\,(t))\ -\ \frac{1}{2}\;\z_{\,h}\,(t)^{\,\top}\,(\Dd_{\,h}\,\otimes\,\M)\z_{\,h}\,(t)\,,
\end{equation}
where $\widetilde{\S}_{\,h}$ is given by \eqref{eq:dG}. Then $\od{\Ec_{\,h}}{t}\ =\ 0\,$.
\end{theorem}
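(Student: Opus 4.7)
The plan is to differentiate $\Ec_{\,h}$ along a solution of \eqref{eq:msp22} and show the result vanishes, using only the skew-symmetry of $\Dd_{\,h}\,$, $\K$ and $\M$. First I would compute
\begin{equation*}
  \od{\Ec_{\,h}}{t}\ =\ \grad\,\widetilde{\S}_{\,h}\,(\z_{\,h})^{\,\top}\od{\z_{\,h}}{t}\ -\ \frac{1}{2}\;\Bigl(\od{\z_{\,h}}{t}\Bigr)^{\,\top}\,(\Dd_{\,h}\,\otimes\,\M)\scal\z_{\,h}\ -\ \frac{1}{2}\;\z_{\,h}^{\,\top}\,(\Dd_{\,h}\,\otimes\,\M)\scal\od{\z_{\,h}}{t}\,,
\end{equation*}
observing from \eqref{eq:dG} that $\grad\,\widetilde{\S}_{\,h}\,(\z_{\,h})$ is precisely the block vector $\bigl(\grad_{\,\z}\,\S\,(\z_{\,h,\,j})\bigr)_{\,j\,=\,0}^{\,N\,-\,1}$ that appears on the right-hand side of \eqref{eq:msp22}.

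The key algebraic observation is that the matrix $\Dd_{\,h}\,\otimes\,\M$ is \emph{symmetric}: by the standard identity $(A\,\otimes\,B)^{\,\top}\ =\ A^{\,\top}\,\otimes\,B^{\,\top}\,$, the skew-symmetry of both $\Dd_{\,h}$ and $\M$ combines (two minus signs) to give $(\Dd_{\,h}\,\otimes\,\M)^{\,\top}\ =\ \Dd_{\,h}\,\otimes\,\M\,$. Consequently the two cross terms above coincide and collapse to $-\,\z_{\,h}^{\,\top}\,(\Dd_{\,h}\,\otimes\,\M)\scal\od{\z_{\,h}}{t}\,$. Then I would substitute for $(\Dd_{\,h}\,\otimes\,\M)\scal\z_{\,h}$ via \eqref{eq:msp22}, namely $(\Dd_{\,h}\,\otimes\,\M)\scal\z_{\,h}\ =\ \grad\,\widetilde{\S}_{\,h}\,(\z_{\,h})\ -\ (\Id_{\,N}\,\otimes\,\K)\scal\od{\z_{\,h}}{t}\,$, which cancels the $\grad\,\widetilde{\S}_{\,h}$ contribution and leaves
\begin{equation*}
  \od{\Ec_{\,h}}{t}\ =\ \Bigl(\od{\z_{\,h}}{t}\Bigr)^{\,\top}\,(\Id_{\,N}\,\otimes\,\K)\scal\od{\z_{\,h}}{t}\,.
\end{equation*}
This quadratic form vanishes because $\Id_{\,N}\,\otimes\,\K$ inherits skew-symmetry from $\K\,$, completing the proof.

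I do not expect any serious obstacle: unlike Theorem~\ref{th1}, the argument here is global and does not involve the operator $\grad_{\,h}$ in \eqref{eq:dgrad2}, so periodicity only enters implicitly through the fact that $\Dd_{\,h}$ is a genuine skew-symmetric endomorphism of $S_{\,h}^{\,N}\,$. The only point requiring a little care is the Kronecker-product bookkeeping that establishes the symmetry of $\Dd_{\,h}\,\otimes\,\M$ and the skew-symmetry of $\Id_{\,N}\,\otimes\,\K\,$; everything else is a one-line cancellation. In particular, it is worth remarking that, in contrast to the discretization of Section~\ref{sec:glrk} where $\widetilde{\Dd_{\,x}}$ fails to be skew-symmetric, the skew-symmetry hypothesis on $\Dd_{\,h}$ is precisely what makes $h\,\Ec_{\,h}$ a conserved quantity and thereby restores at the semi-discrete level the global energy conservation \eqref{eq:globalq1}.
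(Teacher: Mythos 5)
Your argument is correct and is essentially the paper's own proof: both differentiate $\Ec_{\,h}$, exploit that $(\Dd_{\,h}\,\otimes\,\M)^{\,\top}\ =\ \Dd_{\,h}\,\otimes\,\M$ because the two skew-symmetries cancel, substitute $(\Dd_{\,h}\,\otimes\,\M)\scal\z_{\,h}$ from \eqref{eq:msp22}, and conclude since the quadratic form in the skew-symmetric matrix $\Id_{\,N}\,\otimes\,\K$ vanishes. The only difference is the order of operations (you merge the two cross terms before substituting, the paper substitutes first), which is cosmetic.
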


\begin{proof}
Using the skew-symmetry of $\K\,$, $\M\,$, $\Dd_{\,h}$ and the semi-discretization \eqref{eq:msp22}, the following holds:
\begin{eqnarray*}
  \od{\Ec_{\,h}}{t}\ &=&\ \sum_{j\,=\,0}^{N\,-\,1}\,\grad\,\S\,(\z_{\,h,\,j})\;\od{\z_{\,h,\,j}}{t}\ -\ \frac{1}{2}\;\left(\od{\z_{\,h}}{t}\right)^{\,\top}\,(\Dd_{\,h}\,\otimes\,\M)\scal\z_{\,h} \\
  && -\ \frac{1}{2}\;\z_{\,h}^{\,\top}\,(\Dd_{\,h}\,\otimes\,\M)\scal\od{\z_{\,h}}{t} \\
  &=&\ \grad\,\widetilde{\S}_{\,h}\,(\z_{\,h})^{\,\top}\scal\od{\z_{\,h}}{t}\ -\ \frac{1}{2}\;\left(\od{\z_{\,h}}{t}\right)^{\,\top}\left(\grad\,\widetilde{\S}_{\,h}\,(\z_{\,h})\ -\ (\Id_{\,N}\,\otimes\,\K)\scal\od{\z_{\,h}}{t}\right) \\
  && -\ \frac{1}{2}\;\z_{\,h}^{\,\top}\,(\Dd_{\,h}\,\otimes\,\M)\scal\od{\z_{\,h}}{t} \\
  &=& \frac{1}{2}\;\grad\,\widetilde{\S}_{\,h}\,(\z_{\,h})^{\,\top}\scal\od{\z_{\,h}}{t}\ -\ \frac{1}{2}\;\z_{\,h}^{\,\top}\,(\Dd_{\,h}\,\otimes\,\M)\scal\od{\z_{\,h}}{t} \\
  &=& \frac{1}{2}\;\grad\,\widetilde{\S}_{\,h}\,(\z_{\,h})^{\,\top}\scal\od{\z_{\,h}}{t}\ -\ \frac{1}{2}\;\left((\Dd_{\,h}^{\,\top}\,\otimes\,\M^{\,\top})\scal\z_{\,h}\right)^{\,\top}\;\od{\z_{\,h}}{t} \\
  &=& \frac{1}{2}\;\left(\grad\,\widetilde{\S}_{\,h}(\z_{\,h})\ -\ (\Dd_{\,h}\,\otimes\,\M)\scal\z_{\,h}\right)^{\,\top}\;\od{\z_{\,h}}{t} \\
  &=& \frac{1}{2}\;\left((\Id_{\,N}\,\otimes\,\K)\scal\od{\z_{\,h}}{t}\right)^{\,\top}\;\od{\z_{\,h}}{t}\ =\ 0\,.
\end{eqnarray*}
\end{proof}

\begin{remark}\label{r8}
Note that $h\,\Ec_{\,h}$ is a natural discretization of \eqref{eq:globalq1}. In the case of \eqref{eq:5s}, \eqref{eq:6s} with $d\ =\ 10$ and the elements of the MS formulation given by \eqref{eq:K} -- \eqref{eq:pot}, if we assume \eqref{eq:nsing}, then the discrete energy can be written as
\begin{equation*}
  \Ec_{\,h}\,(t)\ \eqdef\ -\ \bigl\langle\,\eta_{\,h}\,(t)\,,\,u_{\,h}\,(t)\,\rangle\rangle\ +\ a\,\bigl\langle\,\widetilde{\Dd}\,\eta_{\,h}\,(t)\,,\,\widetilde{\Dd}_{\,h}\,u_{\,h}\,(t)\,\bigr\rangle\ -\ \bigl\langle\,\widetilde{\Gg}\,(\eta_{\,h}\,(t)\,,\,u_{\,h}\,(t))\,,\,e_{\,h}\bigr\rangle\,,
\end{equation*}
where, as before, ${e}_{\,h}$ denotes the $N-$vector with all components equal to one, $\widetilde{\Dd}_{\,h}$ is given by \eqref{eq:mre1} and $\widetilde{\Gg}$ represents the corresponding version of \eqref{eq:globalq3} with the products understood in the \textsc{Hadamard} sense.
\end{remark}

As far as the global momentum, the behaviour of the semi-discrete version of \eqref{eq:globalq2} is determined by the following result.

\begin{theorem}\label{th3}
Let us assume that $\Dd_{\,h}$ is skew-symmetric, let $\z_{\,h}\,(t)\ =\ \bigl(\z_{\,h,\,j}\,(t)\bigr)_{\,j\,=\,0}^{\,N\,-\,1}\ \in\ S_{\,h}^{\,d\cdot N}$ be a solution of \eqref{eq:msp22} and define
\begin{eqnarray}
  \Ic_{\,h}\ &\eqdef&\ \frac{1}{2}\;\z_{\,h}\,(t)^{\,\top}\,(\Dd_{\,h}\,\otimes\,\K)\scal\z_{\,h}\,(t)\,, \label{eq:sdmomentum2} \\
  \Gm_{\,h}\ &\eqdef&\ \widetilde{\S}_{\,h}\,\bigl(\z_{\,h}\,(t)\bigr)\ -\ \frac{1}{2}\;\z_{\,h}\,(t)^{\,\top}\,(\Id_{\,N}\,\otimes\,\K)\scal\od{\z_{\,h}}{t}\,(t)\,, \nonumber
\end{eqnarray}
where $\widetilde{\S}_{\,h}$ is given by \eqref{eq:dG}. Then,
\begin{equation}\label{eq:sdgcl}
  \od{\Ic_{\,h}}{t}\ +\ \grad_{\,h}\,\Gm_{\,h}\ =\ 0\,,
\end{equation} 
where $\grad_{\,h}$ is given by \eqref{eq:dgrad1}.
\end{theorem}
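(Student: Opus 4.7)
The plan is to proceed as in the proof of Theorem~\ref{th2}, but with the roles of $\M$ and $\K$ interchanged in the quadratic form, and then to identify the surviving contribution with $-\grad_h\Gm_h$ by a slot-by-slot expansion analogous to the computation of $\grad_h\Gm_j$ in the proof of Theorem~\ref{th1}. The whole argument rides on two Kronecker-product identities: since both $\Dd_h$ and $\K$ are skew-symmetric, $\Dd_h\otimes\K$ is \emph{symmetric}; since $\Dd_h^{\,2}$ is symmetric while $\M$ is skew, $\Dd_h^{\,2}\otimes\M$ is \emph{skew-symmetric}.

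The first step is to differentiate $\Ic_h$ directly. The product rule together with the symmetry of $\Dd_h\otimes\K$ collapses the two cross-terms into one, so that
\begin{equation*}
\od{\Ic_h}{t}\ =\ \z_h(t)^{\,\top}(\Dd_h\otimes\K)\od{\z_h}{t}(t)\ =\ \z_h(t)^{\,\top}\Cc_h(\Id_N\otimes\K)\od{\z_h}{t}(t).
\end{equation*}
Substituting the semi-discretization \eqref{eq:msp22} to eliminate $(\Id_N\otimes\K)\od{\z_h}{t}$ produces two contributions. The piece containing $\Dd_h\otimes\M$ reduces to $-\z_h^{\,\top}(\Dd_h^{\,2}\otimes\M)\z_h$ and vanishes by the second symmetry fact, while the piece containing $\grad\widetilde{\S}_h(\z_h)$, after using $\Cc_h^{\,\top}=-\Cc_h$ and the definition \eqref{eq:dgrad1}, reduces to $-\grad_h\widetilde{\S}_h(\z_h)$.

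The second step is to compute $\grad_h\Gm_h$, extending \eqref{eq:dgrad1} to functionals that depend on both $\z_h$ and $\od{\z_h}{t}$ by summing the contributions from each slot separately, which is precisely the convention already used for $\grad_h\Gm_j$ inside the proof of Theorem~\ref{th1}. The $\widetilde{\S}_h$-piece contributes $\grad_h\widetilde{\S}_h(\z_h)$, whereas the bilinear piece $-\tfrac{1}{2}\z_h^{\,\top}(\Id_N\otimes\K)\od{\z_h}{t}$ yields two half-terms of the form $\pm\tfrac{1}{2}\z_h^{\,\top}(\Dd_h\otimes\K)\od{\z_h}{t}$ whose coefficients are equal, again by symmetry of $\Dd_h\otimes\K$, and therefore cancel. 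Hence $\grad_h\Gm_h=\grad_h\widetilde{\S}_h(\z_h)$, and adding the outcomes of the two steps yields \eqref{eq:sdgcl}.

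The only real subtlety lies in the second step: because $\Gm_h$ depends on $\z_h$ both directly through $\widetilde{\S}_h(\z_h)$ and indirectly through $\od{\z_h}{t}$, one must apply the discrete gradient \eqref{eq:dgrad1} to each slot and then exploit the symmetry of $\Dd_h\otimes\K$ to make the two resulting half-terms annihilate one another. Once this bookkeeping is carried out in parallel with the local case of Theorem~\ref{th1}, the remainder of the proof is a single substitution of the ODE \eqref{eq:msp22} together with the elementary Kronecker-product identities $\Cc_h=\Dd_h\otimes\Id_d$, $(\Id_N\otimes\K)\Cc_h=\Dd_h\otimes\K$ and $\Cc_h(\Dd_h\otimes\M)=\Dd_h^{\,2}\otimes\M$.
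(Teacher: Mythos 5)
Your proposal is correct and follows essentially the same route as the paper's own proof: differentiate $\Ic_{\,h}$ using the symmetry of $\Dd_{\,h}\,\otimes\,\K$, substitute \eqref{eq:msp22}, kill the $\M$-term via the skew-symmetry of $\Dd_{\,h}^{\,2}\,\otimes\,\M$, and show separately that $\grad_{\,h}\,\Gm_{\,h}$ reduces to $\grad_{\,h}\,\widetilde{\S}_{\,h}\,(\z_{\,h})$ after the two $\K$-half-terms cancel. The Kronecker-product identities you isolate are exactly the ones the paper uses.
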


\begin{proof}
By using similar arguments to those in Theorem~\ref{th2} and additional properties of the \textsc{Kronecker} product, we can write
\begin{eqnarray}
  \od{\Ic_{\,h}}{t}\ &=&\ \z_{\,h}^{\,\top}\,(\Dd_{\,h}\,\otimes\,\K)\scal\od{\z_{\,h}}{t}\ =\ \z_{\,h}^{\,\top}\,(\Dd_{\,h}\,\otimes\,\Id_{\,d})\scal(\Id_{\,N}\,\otimes\,\K)\scal\od{\z_{\,h}}{t} \nonumber \\
  &=&\ \z_{\,h}^{\,\top}\,(\Dd_{\,h}\,\otimes\,\Id_{\,d})\scal\bigl(\grad\,\widetilde{\S}_{\,h}(\z_{\,h})\ -\ (\Dd_{\,h}\,\otimes\,\M)\scal\z_{\,h}\bigr) \nonumber \\
  &=&\ \z_{\,h}^{\,\top}(\Dd_{\,h}\,\otimes\,\Id_{\,d})\scal\grad\,\widetilde{\S}_{\,h}(\z_{\,h})\ -\ \z_{\,h}^{\,\top}\,(\Dd_{\,h}\,\otimes\,\Id_{\,d})\scal(\Dd_{\,h}\,\otimes\,\M)\scal\z_{\,h}\,. \label{eq:aux1}
\end{eqnarray}
Notice that
\begin{equation*}
  \z_{\,h}^{\,\top}\,(\Dd_{\,h}\,\otimes\,\Id_{\,d})\scal(\Dd_{\,h}\,\otimes\,\M)\scal\z_{\,h}\ =\ \z_{\,h}^{\,\top}\,(\Dd_{\,h}^{\,2}\,\otimes\,\M)\scal\z_{\,h}\,.
\end{equation*}
Therefore, since $\Dd_{\,h}^{\,2}\,\otimes\,\M$ is skew-symmetric, the second term in \eqref{eq:aux1} vanishes. On the other hand, using \eqref{eq:dgrad1} we have
\begin{eqnarray*}
  \grad_{\,h}\,\Gm_{\,h}\ &=&\ \grad\,\widetilde{\S}_{\,h}\,(\z_{\,h})^{\,\top}(\Cc_{\,h}\scal\z_{\,h})\ -\ \frac{1}{2}\;(\Cc_{\,h}\scal\z_{\,h})^{\,\top}\,(\Id_{\,N}\,\otimes\,\K)\scal\od{\z_{\,h}}{t} \\
  && -\ \frac{1}{2}\;\z_{\,h}^{\,\top}\,(\Id_{\,N}\,\otimes\,\K)\scal\Bigl(\Cc_{\,h}\scal\od{\z_{\,h}}{t}\Bigr) \\
  &=&\ \grad\,\widetilde{\S}_{\,h}\,(\z_{\,h})^{\,\top}\,(\Cc_{\,h}\scal\z_{\,h})\ -\ \frac{1}{2}\;(\z_{\,h})^{\,\top}\,(\Dd_{\,h}^{\,\top}\,\otimes\,\Id_{\,d})\scal(\Id_{\,N}\,\otimes\,\K)\scal\od{\z_{\,h}}{t} \\
  && -\ \frac{1}{2}\;\z_{\,h}^{\,\top}\,(\Id_{\,N}\,\otimes\,\K)\scal(\Dd_{\,h}\,\otimes\,\Id_{\,d})\scal\od{\z_{\,h}}{t} \\
  &=&\ \grad\,\widetilde{\S}_{\,h}(\z_{\,h})^{\,\top}\,(\Dd_{\,h}\,\otimes\,\Id_{\,d})\scal\z_{\,h}\ -\ \frac{1}{2}\;(\z_{\,h})^{\,\top}\,(\Dd_{\,h}^{\,\top}\,\otimes\,\K)\scal\od{\z_{\,h}}{t} \\
  && -\ \frac{1}{2}\;\z_{\,h}^{\,\top}\,(\Dd_{\,h}\,\otimes\,\K)\;\od{\z_{\,h}}{t} \\
  &=& \grad\,\widetilde{\S}_{\,h}\,(\z_{\,h})^{\,\top}\,(\Dd_{\,h}\,\otimes\,\Id_{\,d})\scal\z_{\,h}\ =\ -\,\z_{\,h}^{\,\top}\,(\Dd_{\,h}^{\,\top}\,\otimes\,\Id_{\,d})\scal\grad\,\widetilde{\S}_{\,h}(\z_{\,h})\,,
\end{eqnarray*}
which, along with \eqref{eq:aux1}, leads to \eqref{eq:sdgcl}.
\end{proof}

\begin{remark}\label{r9}
In the case of \eqref{eq:5s}, \eqref{eq:6s} with $d\ =\ 10$ and the elements of the MS formulation given by \eqref{eq:K} -- \eqref{eq:pot},  if we assume \eqref{eq:nsing}, then the semi-discrete global momentum \eqref{eq:sdmomentum2} can be written as
\begin{multline*}
  \Ic_{\,h}\,(t)\ \eqdef\ \frac{1}{2}\;\Bigl(\,\bigl\langle\,\eta_{\,h}\,(t)\,,\,\eta_{\,h}\,(t)\,\bigr\rangle\ +\ \bigl\langle\,u_{\,h}\,(t)\,,\,u_{\,h}\,(t)\,\bigr\rangle\ +\\ 
  b\,\bigl\langle\,\Dd_{\,h}\scal\eta_{\,h}\,(t)\,,\,\Dd_{\,h}\scal\eta_{\,h}\,(t)\,\bigr\rangle\ +\ d\,\bigl\langle\,\Dd_{\,h}\scal u_{\,h}\,(t)\,,\,\Dd_{\,h}\scal u_{\,h}\,(t)\,\bigr\rangle\,\Bigr)\,.
\end{multline*}
Note that, using the skew-symmetry of $\Dd_{\,h}$ and the form of $\A$ and $\B$ in \eqref{eq:5s}, \eqref{eq:6s}, straightforward but tedious computations show that
\begin{eqnarray}\label{eq:order1}
  \od{\Ic_{\,h}}{t}\ =\ \bigl\langle\,\A_{\,h}\,(\eta_{\,h}\,,\,u_{\,h})\,,\,\Dd_{\,h}\scal\eta_{\,h}\,\bigr\rangle\ +\ \bigl\langle\,\B_{\,h}\,(\eta_{\,h}\,,\,u_{\,h})\,,\, \Dd_{\,h}\scal u_{\,h}\bigr\rangle\,,
\end{eqnarray}
where $\A_{\,h}\,$, $\B_{\,h}$ are corresponding versions of $\A\,$, $\B$ with \textsc{Hadamard} products. Then, if $s_{\,1}$ and $s_{\,2}$ are, respectively, the order of approximation of $\eta_{\,h}\,$, $u_{\,h}$ to $\eta\,$, $u$ at the grid and of $\Dd_{\,h}$ to the spatial partial derivative, then
\begin{eqnarray*}
  \od{\Ic_{\,h}}{t}\ =\ \O\,(h^{\,r})\,, \qquad r\ \eqdef\ \min\{s_{\,1}\,,\,s_{\,2}\}\,.
\end{eqnarray*}
The preservation of \eqref{eq:sdmomentum2} holds when some symmetry conditions are additionally imposed, in the same line to that considered in \cite{Cano2006} for the discretization of the nonlinear wave equation and the NLS equation. More specifically, we define the $N\times N$ matrix $\D$ which reverses the order of the components of the vector to which it is applied (\ie $(\D\scal x)_{\,j}\ =\ x_{\,N\,-\,j\,-\,1}\,$, $j\ =\ 0,\,\ldots,\,N\,-\,1$),
\begin{equation*}
  \D\ \eqdef\ \begin{pmatrix}
    0 & 0 & 0 & \cdots & 1 \\
    0 & 0 & \cdots & 1 & 0 \\
    \vdots & \vdots & \ddots & \vdots & \vdots \\
    \vdots & \vdots & \vdots & \ddots & \vdots \\
    1 & 0 & 0 & \cdots & 0
  \end{pmatrix}\,.
\end{equation*}
Assume now that
\begin{equation}\label{eq:mconserv1}
  \D\scal\eta_{\,h}\ =\ \eta_{\,h}\,, \qquad  \D\scal u_{\,h}\ =\ u_{\,h}\,,
\end{equation}
\begin{equation}\label{eq:mconserv2}
  \Dd_{\,h}\scal\D\ =\ -\,\D\scal\Dd_{\,h}\,.
\end{equation}
Then one can check that
\begin{equation*}
  \D\scal\A_{\,h}\,(\eta_{\,h}\,,\,u_{\,h})\ =\ \A_{\,h}\,(\eta_{\,h}\,,\,u_{\,h})\,, \qquad
  \D\scal\B_{\,h}\,(\eta_{\,h}\,,\,u_{\,h})\ =\ \B_{\,h}\,(\eta_{\,h}\,,\,u_{\,h})\,.
\end{equation*}
Therefore (\cf \cite{Cano2006})
\begin{eqnarray*}
  \bigl\langle\,\A_{\,h}\,(\eta_{\,h}\,,\,u_{\,h})\,,\,\Dd_{\,h}\scal\eta_{\,h}\,\bigr\rangle\ &=&\ \bigl\langle\,\D\scal\A_{\,h}\,(\eta_{\,h}\,,\,u_{\,h})\,,\,\D\scal\Dd_{\,h}\scal\eta_{\,h}\,\bigr\rangle \\
  &=&\ -\,\bigl\langle\,\D\scal\A_{\,h}\,(\eta_{\,h}\,,\,u_{\,h})\,,\,\Dd_{\,h}\scal\D\scal\eta_{\,h}\,\bigr\rangle\ =\\ 
  && -\,\bigl\langle\,\A_{\,h}\,(\eta_{\,h}\,,\,u_{\,h})\,,\, \Dd_{\,h}\scal\eta_{\,h}\bigr\rangle\,,
\end{eqnarray*}
which implies $\bigl\langle\,\A_{\,h}\,(\eta_{\,h}\,,\,u_{\,h})\,,\,\Dd_{\,h}\scal\eta_{\,h}\,\bigr\rangle\ =\ 0\,$. Similarly we have $\bigl\langle\,\B_{\,h}\,(\eta_{\,h}\,,\,u_{\,h})\,$, $\Dd_{\,h}\scal u_{\,h}\,\bigr\rangle\ =\ 0$ and, according to \eqref{eq:order1}, the preservation of \eqref{eq:sdmomentum2} holds. This result can be applied, for example, to the approximations $\eta_{\,h}\,$, $u_{\,h}$ of (classical or generalized) solitary-wave solutions, \cite{Duran2019}, satisfying \eqref{eq:mconserv1}, see the experiments in Section~\ref{sec:422}. Observe finally that condition \eqref{eq:mconserv2} is satisfied by the two examples of $\Dd_{\,h}$ described in Remark~\ref{r4}.
\end{remark}

\begin{remark}\label{r10}
A similar study can be made in the \textsc{Hamiltonian} case. Thus, if $\Dd_{\,h}$ is skew-symmetric, \eqref{eq:const3b} and \eqref{eq:nsing} hold, then it is straightforward to see that the $(a,\,b,\,a,\,b)$ semi-discretization \eqref{eq:msp33b}, \eqref{eq:msp44b} preserves a \textsc{Hamiltonian} formulation with respect to the structure given by the $2N\,\times\, 2N$ matrix
\begin{equation*}
  J_{\,h}\ \eqdef\ -\,\begin{pmatrix}
    \mathbf{0}_{\,N} & \Dd_{\,h}\scal(\Id_{\,N}\ -\ b\,\Dd_{\,h}^{\,2})^{\,-\,1} \\
    \Dd_{\,h}\scal(\Id_{\,N}\ -\ b\,\Dd_{\,h}^{\,2})^{\,-\,1} & \mathbf{0}_{\,N}
    \end{pmatrix}\,,
\end{equation*}
where $\mathbf{0}_{\,N}$ denotes the $N\times N$ zeros matrix. The discrete \textsc{Hamiltonian} is given by 
\begin{multline}\label{eq:dHam}
  \H_{\,h}\,(U,\,V)\ \eqdef\ \frac{1}{2}\;\Bigl(\bigl\langle\,U\,,\,U\,\bigr\rangle\ +\ \bigl\langle\,V\,,\,V\,\bigr\rangle\ +\ a\,\bigl\langle\,U\,,\,\Dd_{\,h}^{\,2}\scal U\,\bigr\rangle\ +\ a\,\bigl\langle\,V\,,\,\Dd_{\,h}^{\,2}\scal V\,\bigr\rangle\Bigr)\\ 
  +\ \bigl\langle\,G_{\,h}\,(U\,,\,V),\,{e}_{\,h}\,\bigr\rangle\,,
\end{multline}
where
\begin{equation*}
  G_{\,h}\,(U,\,V)\ \eqdef\ \frac{\beta_{\,1\,1}}{3}\;U^{\,3}\ +\ \frac{\beta_{\,1\,2}}{2}\;U^{\,2}\,V\ +\ \beta_{\,2\,2}\,U\,V^{\,2}\ +\ \frac{\alpha_{\,2\,2}}{3}\;V^{\,3}\,,
\end{equation*}
with the products in the \textsc{Hadamard} sense.

The preservation of discrete versions \eqref{eq:lineard} of the linear invariants \eqref{eq:globalq6} holds when the grid operator $\Dd_{\,h}$ satisfies
\begin{equation}\label{eq:dkernel1}
  \Dd_{\,h}^{\,\top}\scal e_{\,h}\ =\ 0\,,
\end{equation}
(\cf \eqref{eq:kernel}, the proof is similar to that of Section~\ref{sec:sec3}). Finally, in the same way as for $\Ic_{\,h}$ in Remark~\ref{r9}, now the semi-discrete version $h\,\I_{\,h}$ of the quadratic invariant \eqref{eq:globalq5}, where
\begin{equation*}
  \I_{\,h}\,(t)\ \eqdef\ \bigl\langle\,\eta_{\,h}\,(t)\,,\,u_{\,h}\,(t)\,\bigr\rangle\ +\ b\,\bigl\langle\,\Dd_{\,h}\scal\eta_{\,h}\,(t)\,,\, \Dd_{\,h}\scal u_{\,h}\,(t)\,\bigr\rangle\,,
\end{equation*}
satisfies
\begin{eqnarray*}
  \od{\I_{\,h}}{t}\ =\ \bigl\langle\,\A_{\,h}\,(\eta_{\,h}\,,\,u_{\,h})\,,\,\Dd_{\,h}\scal u_{\,h}\,\bigr\rangle\ +\ \bigl\langle\,\B_{\,h}\,(\eta_{\,h}\,,\,u_{\,h})\,,\Dd_{\,h}\scal\eta_{\,h}\,\bigr\rangle\,, \label{eq:order2}
\end{eqnarray*}
and similar comments apply; in particular, those concerning the preservation when approximating symmetric solutions, like solitary waves.
\end{remark}


\subsection{Full discretization with symplectic methods}

The formulation of a full MS discretization from \eqref{eq:msp22} is performed by using a symplectic method as time integrator. The resulting scheme will preserve by construction the corresponding discrete version of the semi-discrete conservation law \eqref{eq:sdcl}. This will be here analyzed by taking the IMR as a case study. The corresponding full discretization of \eqref{eq:msp11} has the form
\begin{equation}\label{eq:msf11}
  \K\scal\Dd_{\,t}\,\z_{\,j}^{\,n}\ +\ \M\scal\Mm_{\,t}\,\bigl(\Cc_{\,h}\scal\z^{\,n}\bigr)_{\,j}\ =\ \grad_{\,\z}\,\S\,\bigl(\Mm_{\,t}\scal\z_{\,j}^{\,n}\bigr)\,,
\end{equation}
where $\z^{\,n}\ =\ (\z_{\,j}^{\,n})_{\,j\,\in\,\Z}\ \in\ S_{\,h}$ with $\z_{\,j}^{\,n}\ \in\ \R^{\,d}$ approximating $z_{\,h,\,j}\,(t_{\,n})\,$, $t_{\,n}\ =\ n\,\Delta\,$, $n\ =\ 0,\,1,\,\ldots$ and $\Dd_{\,t}\,$, $\Mm_{\,t}$ are given in \eqref{eq:msopb}. In compact form, \eqref{eq:msf11} reads
\begin{equation}\label{eq:msf22}
  \bigl(\Id_{\,N}\,\otimes\,\K\bigr)\scal\frac{\z^{\,n\,+\,1}\ -\ \z^{\,n}}{\Delta\,t}\ +\ \bigl(\Dd_{\,h}\,\otimes\,\M\bigr)\scal\z^{\,n\,+\,1/2}\ =\ \grad\,\,\S\,(\z^{\,n\,+\,1/2})\,,
\end{equation}
where $\z^{\,n\,+\,1/2}\ \eqdef\ (\Id_{\,N}\,\otimes\,\Mm_{\,t})\scal\z^{\,n}$ and the right hand side of \eqref{eq:msf22} stands for the $d\cdot N$ vector with components $\grad_{\,\z}\,\S\,\bigl((\z^{\,n\,+\,1/2})_{\,j}\bigr)\,$, $j\ =\ 0,\,\ldots,\,N\,-\,1\,$. In the case of \eqref{eq:5s}, \eqref{eq:6s} with $d\ =\ 10$ and the elements of the MS formulation given by \eqref{eq:K} -- \eqref{eq:pot}, the simplified version reads
\begin{multline}
  \bigl(\,\Id_{\,N}\ -\ b\,\Dd_{\,h}^{\,2}\,\bigr)\scal\frac{\eta^{\,n\,+\,1}\ -\ \eta^{\,n}}{\Delta t}\ +\ \Dd_{\,h}\scal\bigl(\,\Id_{\,N}\ +\ a\,\Dd_{\,h}^{\,2}\,\bigr)\scal u^{\,n\,+\,1/2} \\ 
  +\ \Dd_{\,h}\scal\A_{\,h}\,(\eta^{\,n\,+\,1/2}\,,\,u^{\,n\,+\,1/2})\ =\ 0\,, \label{eq:fdsimply1}
\end{multline}
\begin{multline}
  \bigl(\,\Id_{\,N}\ -\ d\,\Dd_{\,h}^{\,2}\,\bigr)\scal\frac{u^{\,n\,+\,1}\ -\ u^{\,n}}{\Delta t}\ +\ \Dd_{\,h}\scal\bigl(\,\Id_{\,N}\ +\ a\,\Dd_{\,h}^{\,2}\,\bigr)\scal\eta^{\,n\,+\,1/2} \\ 
  +\ \Dd_{\,h}\scal\B_{\,h}\,(\eta^{\,n\,+\,1/2}\,,\,u^{\,n\,+\,1/2})\ =\ 0\,. \label{eq:fdsimply2}
\end{multline}
where $\eta^{\,n}\ =\ (\eta_{\,0}^{\,n},\,\ldots,\,\eta_{\,N\,-\,1}^{\,n})^{\,\top}\,$, $u^{\,n}\ =\ (u_{\,0}^{\,n},\,\ldots,\,u_{\,N\,-\,1}^{\,n})^{\,\top}\,$.

As usual, in terms of ${\z}_{\,j}^{\,n\,+\,1/2}\,$, Equation~\eqref{eq:msf22} becomes a fixed point system for each step of the form
\begin{equation}\label{eq:fps}
  \Bigl[\bigl(\,\Id_{\,N}\,\otimes\,\K\,\bigr)\ +\ \frac{\Delta t}{2}\;\bigl(\Dd_{\,h}\,\otimes\,\M\,\bigr)\Bigr]\scal Z\ =\ \bigl(\,\Id_{\,N}\,\otimes\,\K,\bigr)\scal\z^{\,n}\ +\ \frac{\Delta t}{2}\;\grad_{\,\z}\,\S\,(Z)\,,
\end{equation}
with $\z^{\,n\,+\,1}\ =\ 2\,Z\ -\ {\z}^{\,n}\,$. The system \eqref{eq:fps} can be iteratively solved by dividing the gradient term
\begin{equation*}
  \grad_{\,\z}\,\S\,(\z)\ =\ \L\,(\z)\ +\ \N\,(\z)\,,
\end{equation*}
into the linear ($\L$) and the nonlinear ($\N$) parts and solving the following modified version of the fixed point algorithm
\begin{multline*}
  \Bigl[(\Id_{\,N}\,\otimes\,\K)\ +\ \frac{\Delta t}{2}\;(\Dd_{\,h}\,\otimes\,\M)\ -\ \frac{\Delta t}{2}\;(\Id_{\,N}\,\otimes\,\L)\Bigr]\scal Z^{\,[\,\nu\,+\,1]} \\ 
  =\ (\Id_{\,N}\,\otimes\,\K)\scal\z^{\,n}\ +\ \frac{\Delta t}{2}\;(\Id_{\,N}\,\otimes\,\N)\scal(Z^{\,[\,\nu\,]})\,, 
\end{multline*}
for $\nu\ =\ 0,\,1,\,\ldots$

We now derive the corresponding discrete conservation law. Let $U\,$, $V$ be solutions of the variational equation associated to \eqref{eq:msf22}:
\begin{equation}
  (\Id_{\,N}\,\otimes\,\K)\scal\frac{U^{\,n\,+\,1}\ -\ U^{\,n}}{\Delta t}\ +\ 
  (\Dd_{\,h}\,\otimes\,\M)\scal U^{\,n\,+\,1/2}\ =\ \S^{\,\prime\prime}\,\bigl(\,\z^{\,n\,+\,1/2}\,\bigr)\,U^{\,n\,+\,1/2}\,, \label{eq:fdvareq1}
\end{equation}
\begin{equation}
  (\Id_{\,N}\,\otimes\,\K)\scal\frac{V^{\,n\,+\,1}\ -\ V^{\,n}}{\Delta t}\ +\ 
  (\Dd_{\,h}\,\otimes\,\M)\scal V^{\,n\,+\,1/2}\ =\ \S^{\,\prime\prime}\,\bigl(\,\z^{\,n\,+\,1/2}\,\bigr)\,V^{\,n\,+\,1/2}\,, \label{eq:fdvareq2}
\end{equation}
Substracting the inner product of \eqref{eq:fdvareq1} with $V^{\,n\,+\,1/2}$ and the inner product of \eqref{eq:fdvareq2} with $U^{\,n\,+\,1/2}\,$, after some simplifications we have
\begin{multline}
  \frac{1}{\Delta t}\;\Bigl(\bigl\langle\,(\Id_{\,N}\,\otimes\,\K)\scal{U^{\,n\,+\,1}}\,,\,V^{\,n\,+\,1}\,\bigr\rangle\ -\ \bigl\langle\,(\Id_{\,N}\,\otimes\,\K)\scal U^{\,n}\,,\,V^{\,n}\,\bigr\rangle\Bigr)\ + \\
  \bigl\langle\,(\Dd_{\,h}\,\otimes\,\M)\scal U^{\,n\,+\,1/2}\,,\,V^{\,n\,+\,1/2}\,\bigr\rangle\ +\ \bigl\langle\,(\Dd_{\,h}^{\,\top}\,\otimes\,\M)\scal U^{\,n\,+\,1/2}\,,\,V^{\,n\,+\,1/2}\,\bigr\rangle\ =\ 0\,, \label{eq:fdcl}
\end{multline}
which is the fully discrete version of the MS conservation law satisfied by \eqref{eq:msf22}. Observe that if $\Dd_{\,h}^{\,\top}\ =\ -\,\Dd_{\,h}\,$, then the last two terms in \eqref{eq:fdcl} are cancelled, leading to the fully discrete version of \eqref{eq:tsymp}.

Using some results of previous sections additional properties of \eqref{eq:msf22}, related to the MS and \textsc{Hamiltonian} formulations are summarized in the following sections.


\subsubsection{Numerical dispersion relation}

The numerical dispersion relation for \eqref{eq:msf22} makes use of the results obtained in the semi-discrete case and several references, \cite{Frank2006, McLachlan2014}. Thus, for the linearized system
\begin{equation*}
  \left(\Id_{\,N}\,\otimes\,\K\right)\scal\frac{\z^{\,n\,+\,1}\ -\ \z^{\,n}}{\Delta t}\ +\ \left(\Dd_{\,h}\,\otimes\,\M\right)\scal\z^{\,n\,+\,1/2}\ =\ \L\,(\z^{\,n\,+\,1/2})\,,
\end{equation*}
the ansatz of a periodic solution
\begin{equation*}
  \z_{\,h,\,j}\,(t)\ =\ \ue^{\,\ui\,(\Omega\,n\,\Delta t\ +\ \xi\,j\,h)}\,a\,, \qquad a\ \in\ \R^{\,d}\,,
\end{equation*}
with $a$ solution of \eqref{eq:dispr1b} and $\xi$ satisfying \eqref{soldisp2}, leads to the dispersion relations
\begin{equation*}
  \frac{\ue^{\,\ui\,\Omega\,\Delta t}\ -\ 1}{\Delta t}\ =\ \ui\,\omega\;\frac{\ue^{\ui\,\Omega\,\Delta t}\ +\ 1}{2}\,, \qquad (\Dd_{\,h}\ -\ \ui\,k)\scal\ep\ =\ 0\,,
\end{equation*}
where the first equation can be alternatively written as
\begin{equation}\label{eq:imrdl}
  \omega\ =\ \frac{2}{\Delta t}\;\tan\,\Bigl(\,\frac{\Omega\,\Delta t}{2}\,\Bigr)\,,
\end{equation}
and $k\ =\ k\,(\xi)$ follows  the approach developed in Section~\ref{sec:412} and for $\omega\,$, $k$ satisfying the continuous dispersion relation. Formula \eqref{eq:imrdl} typically corresponds to the IMR as time integrator; see \cite{Ascher2004, Frank2006, McLachlan2014} for the derivation of the corresponding formulas when any other method of the GLRK family is used.


\subsubsection{Conservation properties}
\label{sec:422}

We focus now on the behaviour of the type of full discretization represented by \eqref{eq:msf22} with respect to the global quantities of the associated semi-discrete problem. Standard theory of \textsc{Runge}--\textsc{Kutta} (RK) methods \cite{Hairer2002}, establishes the preservation of the linear invariants \eqref{eq:lineard}
\begin{equation*}
  C_{\,1,\,h}\,(\,\eta^{\,n}\,,\,u^{\,n}\,)\ =\ C_{\,1,\,h}\,(\,\eta^{\,0}\,,\,u^{\,0}\,)\,, \qquad
  C_{\,2,\,h}\,(\,\eta^{\,n}\,,\,u^{\,n}\,)\ =\ C_{\,2,\,h}\,(\,\eta^{\,0}\,,\,u^{\,0}\,)\,,
\end{equation*}
where $(\eta^{\,n}\,,\,u^{\,n})$ is a solution of \eqref{eq:fdsimply1}, \eqref{eq:fdsimply2} with initial condition $(\eta^{\,0}\,,\,u^{\,0})\,$. As for the quadratic quantities $\Ic_{\,h}\,$, $\I_{\,h}\,$, under the symmetric conditions explained in Remark~\ref{r9}, the preservation also holds by the numerical solution given by any symplectic method, \cf \cite{Cano2006}. This is illustrated in Figures~\ref{Fig:swp1} and \ref{Fig:swp2}. The scheme \eqref{eq:msf22} was implemented on the segment $[\,-256,\,256\,]$ and with $h\ =\ 0.125\,$. The operator $\Dd_{\,h}$ is given by the pseudo-spectral differentiation operator to simulate classical (CSW) and generalized (GSW) solitary-wave solutions of \eqref{eq:5s}, \eqref{eq:6s} in cases with MS and \textsc{Hamiltonian} structures and with the MS structure only. They show the approximate $\eta$ profile of the solitary wave and the time behaviour of the error of the corresponding quantity and different time step sizes. The results confirm the preservation when simulating symmetric solutions like these profiles.

\begin{figure}
  \centering
  \subfigure[]{\includegraphics[width=7.5cm]{figs/csw1.eps}}
  \subfigure[]{\includegraphics[width=7.5cm]{figs/gsw2.eps}}
  \subfigure[]{\includegraphics[width=7.5cm]{figs/ms2_fig1.eps}}
  \subfigure[]{\includegraphics[width=7.5cm]{figs/ms2_fig2.eps}}
  \caption{Numerical approximation of Equations~\eqref{eq:5s}, \eqref{eq:6s} with $h\ =\ 0.125\,$, $\alpha_{\,1\,1}\ =\ 0\,$, $\alpha_{\,1\,2}\ =\ 0.46\,$, $\alpha_{\,2\,2}\ =\ 0\,$, $\beta_{\,1\,1}\ =\ 0.23\,$, $\beta_{\,1\,2}\ =\ 0\,$, $\beta_{\,2\,2}\ =\ 0.73$ by \eqref{eq:msf22}. (a), (c) $\zeta$ CSW profile for $a\ =\ c\ =\ 0\,$, $b\ =\ d\ =\ 1/6\,$, $c_{\,s}\ =\ 1.2$ and error in the invariant $\I_{\,h}$ vs. time; (b), (d) $\zeta$ GSW profile for $a\ =\ c\ =\ 1/6\,$, $b\ =\ d\ =\ 0\,$, $c_{\,s}\ =\ 1.3$ and error in the invariant  $\I_{\,h}$ vs. time.}
  \label{Fig:swp1}
\end{figure}

\begin{figure}
  \centering
  \vspace*{-1.0em}
  \subfigure[]{\includegraphics[width=0.4\textwidth]{figs/csw3.eps}}
  \vspace*{-2.75em}
  \subfigure[]{\includegraphics[width=0.99\textwidth]{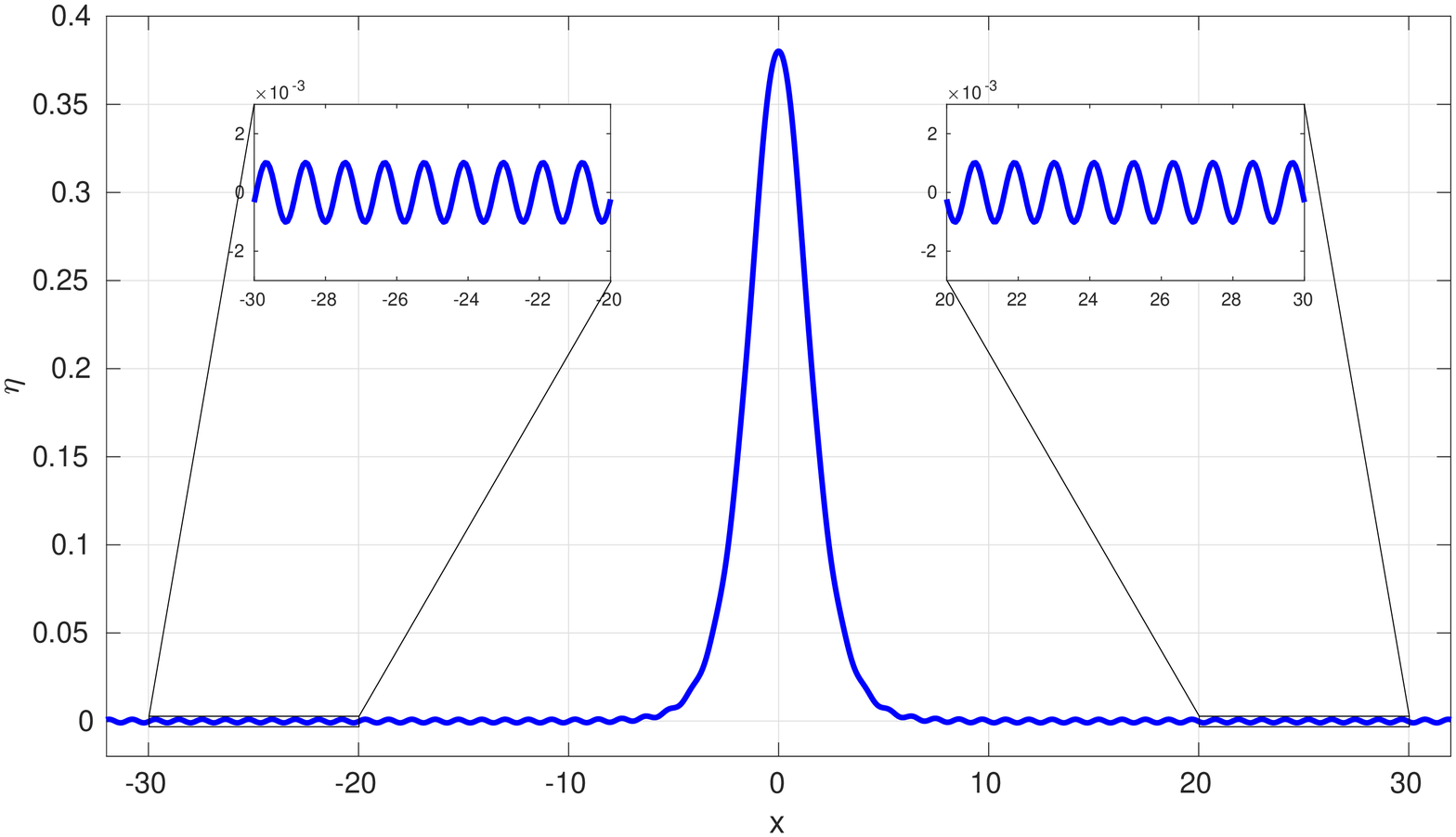}}
  \vspace*{-0.15em}
  \subfigure[]{\includegraphics[width=0.4\textwidth]{figs/ms2_fig3.eps}}
  \subfigure[]{\includegraphics[width=0.4\textwidth]{figs/ms2_fig4.eps}}
  \vspace*{-0.5em}
  \caption{Numerical approximation of Equations~\eqref{eq:5s}, \eqref{eq:6s} with $h\ =\ 0.125\,$, $\alpha_{\,1\,1}\ =\ 0\,$, $\alpha_{\,1\,2}\ =\ 0.46\,$, $\alpha_{\,2\,2}\ =\ 0\,$, $\beta_{\,1\,1}\ =\ 0.23\,$, $\beta_{\,1\,2}\ =\ 0\,$, $\beta_{\,2\,2}\ =\ 0.73$ by \eqref{eq:msf22}. (a), (c) $\zeta$ CSW profile for $a\ =\ c\ =\ 0\,$, $b\ =\ 1/4\,$, $d\ =\ 1/12\,$, $c_{\,s}\ =\ 1.05$ and error in the invariant  $\Ic_{h}$ vs. time; (b), (d) $\zeta$ GSW profile for $a\ =\ c\ =\ 1/9\,$, $b\ =\ 1/9\,$, $d\ =\ 0\,$, $c_{\,s}\ =\ 1.2$ and error in the invariant $\Ic_{\,h}$ vs. time.}
  \label{Fig:swp2}
\end{figure}

As far as the energy quantities $\Ec_{\,h}$ and $\H_{\,h}$ are concerned, the good behaviour when approximating solitary waves is expected, due to the relative equilibrium condition \eqref{eq:msre}. This behaviour is observed  in Figures~\ref{Fig:swp3} and \ref{Fig:swp4}. They display the error in the energy and the \textsc{Hamiltonian} as functions of time when simulating the solitary waves solutions considered in Figures~\ref{Fig:swp1} and \ref{Fig:swp2}.

\begin{figure}
  \centering
  \subfigure[]{\includegraphics[width=7.5cm]{figs/csw1.eps}}
  \subfigure[]{\includegraphics[width=7.5cm]{figs/gsw2.eps}}
  \subfigure[]{\includegraphics[width=7.5cm]{figs/ms2_fig5.eps}}
  \subfigure[]{\includegraphics[width=7.5cm]{figs/ms2_fig6.eps}}
  \caption{Numerical approximation of Equations~\eqref{eq:5s}, \eqref{eq:6s} with $h\ =\ 0.125\,$, $\alpha_{\,1\,1}\ =\ 0\,$, $\alpha_{\,1\,2}\ =\ 0.46\,$, $\alpha_{\,2\,2}\ =\ 0\,$, $\beta_{\,1\,1}\ =\ 0.23\,$, $\beta_{\,1\,2}\ =\ 0\,$, $\beta_{\,2\,2}\ =\ 0.73$ by \eqref{eq:msf22}. (a), (c) $\zeta$ CSW profile for $a\ =\ c\ =\ 0\,$, $b\ =\ d\ =\ 1/6\,$, $c_{\,s}\ =\ 1.2$ and error in the invariant $\H_{h}$ vs. time; (b), (d) $\zeta$ GSW profile for $a\ =\ c\ =\ 1/6\,$, $b\ =\ d\ =\ 0\,$, $c_{\,s}\ =\ 1.3$ and error in the invariant $\H_{\,h}$ vs. time.}
  \label{Fig:swp3}
\end{figure}

\begin{figure}
  \centering
  \vspace*{-1.0em}
  \subfigure[]{\includegraphics[width=0.4\textwidth]{figs/csw3.eps}}
  \vspace*{-2.75em}
  \subfigure[]{\includegraphics[width=0.99\textwidth]{figs/gsw5b.eps}}
  \vspace*{-0.15em}
  \subfigure[]{\includegraphics[width=7.5cm]{figs/ms2_fig7.eps}}
  \subfigure[]{\includegraphics[width=7.5cm]{figs/ms2_fig8.eps}}
  \vspace*{-0.5em}
  \caption{Numerical approximation of Equations~\eqref{eq:5s}, \eqref{eq:6s} with $h\ =\ 0.125\,$, $\alpha_{\,1\,1}\ =\ 0\,$, $\alpha_{\,1\,2}\ =\ 0.46\,$, $\alpha_{\,2\,2}\ =\ 0\,$, $\beta_{\,1\,1}\ =\ 0.23\,$, $\beta_{\,1\,2}\ =\ 0\,$, $\beta_{\,2\,2}\ =\ 0.73$ by \eqref{eq:msf22}. (a), (c) $\zeta$ CSW profile for $a\ =\ c\ =\ 0\,$, $b\ =\ 1/4\,$, $d\ =\ 1/12\,$, $c_{\,s}\ =\ 1.05$ and error in the invariant  $\Ec_{\,h}$ vs. time; (b), (d) $\zeta$ GSW profile for $a\ =\ c\ =\ 1/9\,$, $b\ =\ 1/9\,$, $d\ =\ 0\,$, $c_{\,s}\ =\ 1.2$ and error in the invariant $\Ec_{\,h}$ vs. time.}
  \label{Fig:swp4}
\end{figure}


\section{Concluding remarks}
\label{sec:sec5}

In the present paper the numerical approximation of the periodic IVP of some systems of \textsc{Boussinesq} type, proposed as models for surface wave propagation, is discussed. The analysis is focused on the preservation, in some numerical sense, of the multi-symplectic and \textsc{Hamiltonian} structures of some of the equations, studied previously by the authors in a recent paper \cite{Duran2019}, as well as some consequences of these structures, such as the linear dispersion relation and the behaviour with respect to local and global invariant quantities.

Following the literature on the subject, two approaches are considered. The first one is based on constructing MS discretizations by integrating, in both space and time, with symplectic methods, \cite{Ascher2004, Ascher2005, Bridges2001}. Some of the properties and drawbacks of this classical approach, discussed here, are obtained from direct application of previous, general results on this matter, \cite{Bridges2001, Bridges2001a, Frank2006, Reich2000a, McLachlan2014}, to the \textsc{Boussinesq}-type equations under study. Among them, we confirm the difficulties to define an explicit ODE system when the equations are discretized in space with GLRK methods, \cite{Reich2000a, McLachlan2014}. As a first attempt to overcome this main drawback, we also study the application of PRK methods into this approach, recently proposed in \cite{Ryland2008}. We find that our systems cannot be written in a \textsc{Darboux} normal form which leads to a well-defined ODE system when using semi-discretizations in space based on the family of \textsc{Lobatto} IIIA-IIIB PRK methods. We still explore these schemes directly, using different partitions of the variables, with no improvements with respect to the GLRK methods and leaving this point as an open question for future research.

The origin of the second approach considered in this paper for the numerical approximation of the \textsc{Boussinesq}-type equations is in accordance to the strategy adopted in some references, \cite{Bridges2001a, Chen2001, Islas2003a, Islas2006}, where the corresponding equations are discretized in space with spectral methods and in time with a symplectic integrator. Generalizing this idea, we introduce semi-discretizations in space based on a general grid operator approximating the spatial partial derivative and discuss the requirements on it leading to a explicit ODE semi-discrete system, as well as semi-discrete versions of the MS conservation law. The linear dispersion relation and the preservation of local and global quantities of the periodic IVP are also studied. The approach is developed for a general MS system and specific results for the \textsc{Boussinesq}-type equations under study are emphasized. The semi-discrete system is then integrated in time with a symplectic method. For the present paper, the implicit midpoint rule is chosen; the corresponding fully discrete MS conservation law and the linear dispersion relation are derived, while the behaviour with respect to the semi-discrete global quantities is discussed, some analytically and some computationally, emphasizing the good behaviour when simulating solitary-wave solutions. These results can be adapted in a relatively direct way if other symplectic methods, such as another one of the GLRK family or any composition method based on the implicit midpoint rule, is used. We leave this generalization to future works.


\bigskip\bigskip
\subsection*{Acknowledgments}
\addcontentsline{toc}{subsection}{Acknowledgments}

AD was supported by Junta de \textsc{Castilla y Leon} and Fondos FEDER under the Grant VA041P17. DM's work was supported by the \textsc{Marsden} Fund administered by the Royal Society of \textsc{New Zealand} with contract number VUW1418. AD and DM would like to acknowledge the support from the University \textsc{Savoie Mont Blanc} and the hospitality of LAMA UMR \#5127 during their respective visits in 2019.


\appendix
\section{Abbreviations}

\begin{description}
  \item[BEA] Backward Error Analysis
  \item[CSW] Classical Solitary Wave
  \item[IMR] Implicit Midpoint Rule
  \item[IVP] Initial-Valuer Problem
  \item[GLRK] \textsc{Gau\ss}--\textsc{Legendre} \textsc{Runge}--\textsc{Kutta}
  \item[GSW] Generalized Solitary Wave
  \item[KdV] \textsc{Korteweg}--\textsc{de Vries}
  \item[MS] Multi-Symplectic
  \item[NLS] Nonlinear \textsc{Schr\"{o}dinger}
  \item[ODE] Ordinary Differential Equation
  \item[PDE] Partial Differential Equation
  \item[PRK] Partitioned \textsc{Runge}--\textsc{Kutta}
\end{description}


\bigskip\bigskip
\addcontentsline{toc}{section}{References}
\bibliographystyle{abbrv}
\bibliography{biblio}
\bigskip\bigskip

\end{document}